\documentclass[12pt,a4paper]{amsart}
\usepackage{amsfonts,color}
\usepackage{amsthm}
\usepackage{amsmath}
\usepackage{amscd}
\usepackage{amssymb}
\usepackage[latin2]{inputenc}
\usepackage{t1enc}
\usepackage[mathscr]{eucal}
\usepackage{indentfirst}
\usepackage[numbers,sort]{natbib}
\usepackage{graphicx}
\usepackage{graphics}
\usepackage{float}
\usepackage{pict2e}
\usepackage{epic}
\usepackage{url}
\usepackage{epstopdf}
\usepackage{comment}
\usepackage{bm}
\usepackage{array}
\usepackage{wrapfig}
\usepackage{multirow}
\usepackage[T1]{fontenc}
\usepackage{dsfont}
\usepackage{enumerate} 
\usepackage{tabularx}
\usepackage{relsize}

\usepackage{hyperref}
\numberwithin{equation}{section}
\usepackage[margin=3cm]{geometry}

\allowdisplaybreaks

\theoremstyle{plain}
\newtheorem{theorem}{Theorem}[section]
\newtheorem{lemma}[theorem]{Lemma}
\newtheorem{corollary}[theorem]{Corollary}
\newtheorem{proposition}[theorem]{Proposition}

 \theoremstyle{definition}
\newtheorem{definition}[theorem]{Definition}

\newtheorem{remark}[theorem]{Remark}
\newtheorem{claim}[theorem]{Claim}
\newtheorem{assumption}[theorem]{Assumption}

\newcommand{\gt}{g}

\newcommand{\kk}{\alpha}
\newcommand{\Ct}{C}

\begin{document}

\title{Permuton limit of a generalization of the Mallows and $k$-card-minimum models}

\author{Joanna Jasi\'nska}

\author{Bal\'azs R\'ath}

\address{ ELTE: E\"{o}tv\"{o}s Lor\'{a}nd University Mathematics Institute, H-1117 Budapest, P\'azm\'any P\'eter s\'et\'any 1/C. HUN-REN Alfr\'ed R\'enyi Institute of Mathematics, Re\'altanoda utca 13-15, H-1053 Budapest, Hungary.}
\email{joanna.jasinska1998@gmail.com}

\address{ Department of Stochastics,
Institute of Mathematics,
Budapest University of Technology and Economics,
M\H{u}egyetem rkp. 3., H-1111 Budapest, Hungary. HUN-REN Alfr\'ed R\'enyi Institute of Mathematics, Re\'altanoda utca 13-15,
H-1053 Budapest, Hungary.}
\email{rathbalazs@gmail.com}

\thanks{
}

 \subjclass[2010]{60G42; 60J10; 60G42}

 \keywords{random permutations ; permutons ; Mallows model ; $k$-card minimum model}

\begin{abstract} We introduce and study a new random permutation model that generalizes the $k$-card minimum model \cite{Travers} and the Mallows model \cite{Mallows}. 
We calculate the permuton limit of such a sequence of random permutations. As a corollary, we deduce the law of large numbers for pattern densities. Moreover, we prove a universality result about the band structure of the limiting permuton, confirming a conjecture of Travers about the $k$-card minimum model. More specifically, we show that if  a certain model parameter goes to infinity then the appropriately scaled restriction of the permuton measure  to a line that intersects the diagonal perpendicularly  converges weakly to the logistic distribution.

\end{abstract}

\maketitle

\section{Introduction}

A permutation of $[n]=\{1,2,\ldots, n\}$ can be obtained by sequentially picking cards from the deck $[n]$. In \cite{Travers} Travers described the $k$-card-minimum model (or $k$CM for short), where in every step one chooses $k$ cards independently and uniformly at random  from the remaining deck and then picks the lowest one to be the next card in the rearranged deck. This method produces permutations which (in some sense) get closer and closer to the identity permutation as we make the parameter $k$ larger and larger. Another well-known example of a random permutation model is the Mallows model, introduced by Mallows in \cite{Mallows}. There one chooses a given permutation $\pi$ with probability proportional to $q^{\mathrm{inv}(\pi)}$ where $q$ is a positive real parameter and $\mathrm{inv}(\pi)$ denotes the number of inversions in $\pi$. Again, in the parameter regime where inversions are more and more penalized, the random permutation will get closer and closer to the identity permutation.
In \cite[Section 6]{Travers} Travers conjectures that these models have a similar band structure around the diagonal when $q \approx 1-\frac{k}{n}$ and $k$ is large, noting that there is a ``card-picking'' algorithm (very similar to the $k$CM model, to be described in Section \ref{section_gen_mod}) that generates a random permutation with the  same law as the Mallows model.

Motivated by the above two models, we introduce and study a model that generalizes both of them. Again, we sequentially remove cards from the deck, and the model is characterised by the probability distribution with which we pick the next card from the remaining deck. Under appropriate assumptions on this distribution, we prove in Theorem \ref{permuton_convergence_thm} that the sequence of random permutations that we generate converges in probability to a deterministic permuton \cite{the_paper} as $n \to \infty$, and hence we obtain weak law of large numbers for pattern densities as a corollary. Our result generalizes that of \cite{starr} about the permuton limit of the Mallows model.

We also show that (under certain natural assumptions) our general model exhibits a band structure around the diagonal. More specifically, in Theorem \ref{logistic_limit_thm} we derive a limit theorem for the (appropriately scaled) distribution of mass of the permuton around the diagonal as a certain model parameter goes to infinity. The limiting distribution (the so-called logistic distribution) is the same for all models that satisfy the conditions of Theorem \ref{logistic_limit_thm} (which includes the Mallows and $k$CM models),
confirming the conjecture of Travers formulated in \cite[Section 6]{Travers} about the universality of the band structure of these random permutation models.

Our Theorem \ref{permuton_convergence_thm} fits in the ongoing endeavor to identify the permuton limits of natural random permutation models. Other papers in this direction include \cite{runsort, separable, biased_permutations}.

\section{Statements of main results}\label{section_gen_mod}

We denote by $S_n$ the set of permutations of $[n]$.

\begin{definition}[$\gt$-random permutation]\label{def_gen_perm_model} Assume given a strictly decreasing $C^1[0,1]$ function $\gt$ satisfying $\gt(0)=1$ and $\gt(1)=0$.
For each $n \in \mathbb{N}_+$ and $i \in [n]$ let $\nu^{(n),i}_{\gt}$ denote the probability distribution supported on $[n-(i-1)]$ with probability masses
\begin{equation}\label{nu_prob_def_eq}
    \nu^{(n),i}_{\gt}( \{ l \} ):= \frac{\gt\left(\frac{(i-1)+(l-1)}{n}\right)-\gt\left(\frac{(i-1)+l}{n}\right)  }{\gt\left( \frac{i-1}{n} \right)}, \qquad l=1,\dots, n-(i-1).
\end{equation}
We say that a random element $\sigma_n$ of $S_n$ is a $\gt$-random permutation, or briefly write $\sigma_n \sim \mathrm{PERM}(\gt,n)$, if it can be generated as follows. We write the number $i$ on the $i$'th card of the original deck (counted from the bottom up).
$\sigma_n(i)$ will denote the number written on the card that is at position  $i$  in the new deck (counted from the bottom up). We build the new deck by picking a card from the remaining old deck  in each step and placing it at the top of the new deck.
\begin{itemize} \item  Let $\Ct^{(n),i}, \, i \in [n]$ denote independent random variables with distribution \begin{equation*} \Ct^{(n),i} \sim \nu^{(n),i}_{\gt}.\end{equation*}
\item For any $i \in [n]$ we will denote by $D^{(n)}_i$ the set of cards remaining in the old deck before the $i$'th step. Note that $|D^{(n)}_i|=n-(i-1)$.
\item
Let us define $D^{(n)}_i$   and $\sigma_n(i)$ for $i \in [n]$  recursively. Let $D^{(n)}_1 = [n]$.
\item If $D^{(n)}_i$ is already defined,  we pick the $\Ct^{(n),i}$-th card of $D^{(n)}_i$ (counted from the bottom of $D^{(n)}_i$) and let $\sigma_n(i)$ be the number written on this card. We remove this card from the old deck, i.e., we let $D^{(n)}_{i+1} = D^{(n)}_i \setminus \{\sigma_n(i)\}$, and we place it on the top of the new deck.
\item We continue this procedure until there are no cards left in the old deck.
\end{itemize}
\end{definition}



Definition \ref{def_gen_perm_model} is a generalization of  two important special cases: the Mallows model introduced in \cite{Mallows} and  the  $k$-card-minimum model (or  $k$CM for short) introduced in \cite{Travers}.

The following  definition of the Mallows model is proved to be equivalent to the original definition in \cite{GO, Mallows}.
\begin{definition} Let $n \in \mathbb{N}$ and $q \in \mathbb{R}_+\setminus \{1\}$.
    The $(n,q)$-Mallows model can be generated as in Definition \ref{def_gen_perm_model} if
        we sample $\Ct^{(n),i}$ according to the truncated geometric distribution
\begin{align}\label{distribution_Mallows}
        \mathbb{P}(\Ct^{(n),i} = l) = \frac{q^{l-1} - q^{l}}{1-q^{n-(i-1)}}, \qquad l=1,\dots, n-(i-1).
        \end{align}
\end{definition}
\begin{corollary}\label{cor_g_Mallows} For any $\beta \in \mathbb{R}$
    we recover the $(n,q)$-Mallows model with parameter $q=e^{-\frac{\beta}{n}}$ as a special case of  Definition \ref{def_gen_perm_model} if we choose
    \begin{equation}\label{eq_g_Mallows}
        \gt(x) = \begin{cases} \frac{e^{\beta \cdot (1-x)}-1}{e^{\beta}-1} & \text{ if } \; \; \beta \neq 0, \\
        1-x & \text{ if } \; \; \beta = 0.
        \end{cases}
    \end{equation}
\end{corollary}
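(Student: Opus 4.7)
The plan is a direct verification: I will substitute the explicit function $\gt$ from \eqref{eq_g_Mallows} into the definition \eqref{nu_prob_def_eq} of $\nu^{(n),i}_{\gt}$ and algebraically simplify, using the substitution $q = e^{-\beta/n}$, until the resulting probability mass function matches the truncated geometric distribution in \eqref{distribution_Mallows}. Once the laws of the driving random variables $\Ct^{(n),i}$ coincide for each $n$ and $i$, the sampling procedure from Definition \ref{def_gen_perm_model} produces, by construction, a permutation with the same law as the Mallows sample.

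First I would check that the proposed $\gt$ is an admissible choice, i.e. that it lies in $C^1[0,1]$, is strictly decreasing, and satisfies $\gt(0)=1$, $\gt(1)=0$. The boundary values are immediate from \eqref{eq_g_Mallows}, and the derivative $\gt'(x) = -\beta e^{\beta(1-x)}/(e^\beta-1)$ is strictly negative for every $\beta \neq 0$ (the cases $\beta>0$ and $\beta<0$ both work because numerator and denominator change sign together). So $\gt$ falls under the hypotheses of Definition \ref{def_gen_perm_model}.

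The core computation observes that with $q=e^{-\beta/n}$ one has $e^{\beta(1-k/n)} = e^\beta q^k$, so
\begin{equation*}
\gt\!\left(\tfrac{k}{n}\right) = \frac{e^\beta q^k - 1}{e^\beta - 1}.
\end{equation*}
Substituting $k=i-1+l-1$ and $k=i-1+l$ in the numerator of \eqref{nu_prob_def_eq} and $k=i-1$ in the denominator, the factor $1/(e^\beta-1)$ cancels. The numerator becomes $e^\beta q^{i-1}(q^{l-1}-q^l)$, and using $e^\beta = q^{-n}$ the denominator becomes $e^\beta q^{i-1} - 1 = q^{-(n-(i-1))} - 1 = q^{-(n-(i-1))}(1-q^{n-(i-1)})$. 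Taking the ratio cancels the $e^\beta q^{i-1} = q^{-(n-(i-1))}$ prefactors on top and bottom and yields exactly $(q^{l-1}-q^l)/(1-q^{n-(i-1)})$, which is \eqref{distribution_Mallows}.

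There is no genuine obstacle here; the only place to be careful is keeping track of signs when $\beta<0$ (so that $e^\beta-1<0$), but since this constant enters only as an overall factor that cancels in the ratio, the identification goes through uniformly in $\beta\in\mathbb{R}\setminus\{0\}$. Equality of the laws of the $\Ct^{(n),i}$ for all $i$ then implies equality in distribution of the full permutation $\sigma_n$ with the Mallows sample, completing the proof.
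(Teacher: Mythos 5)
Your proof is correct. The paper states this corollary without proof (it is a direct calculation left implicit), and your verification — checking admissibility of $\gt$, then substituting $\gt(k/n) = (e^\beta q^k - 1)/(e^\beta - 1)$ with $q = e^{-\beta/n}$ into \eqref{nu_prob_def_eq} and using $e^\beta = q^{-n}$ to recover \eqref{distribution_Mallows} — is exactly the expected argument.
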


In the  $k$CM model \cite{Travers}  the random variable $\Ct^{(n),i}$ from Definition \ref{def_gen_perm_model} is the minimum of $k$ i.i.d.\ random variables with uniform distribution on the set $\{ 1,\dots, n-(i-1) \}$.

\begin{corollary}\label{cor_g_kCM}
    The $k$CM \cite{Travers} arises as the special case of Definition \ref{def_gen_perm_model} if we choose
    \begin{align}\label{eq_g_kCM}
        \gt(x) = (1-x)^k.
    \end{align}
\end{corollary}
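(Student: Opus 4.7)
The plan is to verify the claim by direct computation: plug $g(x) = (1-x)^k$ into formula \eqref{nu_prob_def_eq} and check that the resulting probability mass function on $\{1,\dots,n-(i-1)\}$ agrees with the distribution of the minimum of $k$ i.i.d.\ uniform random variables on $\{1,\dots,n-(i-1)\}$. Once this identity is established for each $i$, the recursive card-picking construction in Definition \ref{def_gen_perm_model} produces a permutation with the same law as the $k$-CM model, because both models use the same rule (pick the $C^{(n),i}$-th card from the remaining deck) and, by construction, the increments $C^{(n),1}, C^{(n),2}, \dots$ are independent in both cases.

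For the explicit computation, set $m := n-(i-1)$. Then
\begin{equation*}
g\!\left(\tfrac{(i-1)+(l-1)}{n}\right) - g\!\left(\tfrac{(i-1)+l}{n}\right) = \left(\tfrac{m-l+1}{n}\right)^{k} - \left(\tfrac{m-l}{n}\right)^{k},
\end{equation*}
and $g\!\left(\tfrac{i-1}{n}\right) = (m/n)^k$, so the factor $n^{-k}$ cancels and
\begin{equation*}
\nu^{(n),i}_{g}(\{l\}) = \frac{(m-l+1)^{k} - (m-l)^{k}}{m^{k}}.
\end{equation*}
On the other hand, if $U_1,\dots,U_k$ are i.i.d.\ uniform on $\{1,\dots,m\}$, then $\mathbb{P}(\min_j U_j \ge l) = ((m-l+1)/m)^k$, whence $\mathbb{P}(\min_j U_j = l) = ((m-l+1)^k - (m-l)^k)/m^k$, matching the previous display.

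I do not anticipate any serious obstacle; this is a routine algebraic verification, and the only thing to be careful about is bookkeeping the shifts by $i-1$ and by $1$ correctly. The argument is entirely parallel to the one implicit in Corollary \ref{cor_g_Mallows}, where one checks that \eqref{eq_g_Mallows} reproduces the truncated geometric weights \eqref{distribution_Mallows}.
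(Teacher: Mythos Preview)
Your proposal is correct and is precisely the routine verification the paper leaves implicit (the paper states Corollary \ref{cor_g_kCM} without proof, having just recalled that in the $k$-CM model $C^{(n),i}$ is the minimum of $k$ i.i.d.\ uniforms on $\{1,\dots,n-(i-1)\}$). Your bookkeeping of the shifts is accurate and the computation matches exactly.
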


\begin{definition}[Permuton] A permuton $\mu$ is a probability measure on $[0,1]^2$ such that both marginal distributions are uniform on $[0,1]$.
    If $(X,Y) \sim \mu$ then $F_\mu$ denotes the joint cumulative distribution function of $(X,Y)$. Let us define the $\cup$-c.d.f.\ of $\mu$ by
        \begin{equation}\label{def_eq_cup_cdf}
        V_\mu(x,y)=x+y-F_\mu(x,y)=\mathbb{P}\left(\{X \leqslant x\} \cup \{ Y \leqslant y \}  \right).
    \end{equation}
\end{definition}

\begin{definition}[Permutations as permutons]\label{def_permutation_permuton} If $\sigma \in S_n$, let $\mu_\sigma$ denote the permuton  with joint p.d.f.\
$f_\sigma(x,y)=n \cdot \mathds{1}[ \sigma(\lceil n \cdot x \rceil)=\lceil n \cdot y \rceil ]$ if $(x,y)\in (0,1]^2$ and zero otherwise. Let $F_{\sigma}$ denote the joint c.d.f.\ of $\mu_\sigma$. Thus,
for $i,j \in \{0, 1, 2, \ldots, n\}$ we have
\begin{align}
\label{empirical_cdf}
F_\sigma\Big(\frac{i}{n},\frac{j}{n}\Big) &= \frac{1}{n}\sum_{t=1}^{i} \mathds{1}[\, \sigma(t) \leqslant j \, ]=\frac{1}{n} \cdot\left| [j] \setminus D^{(n)}_{i+1} \right| ,
\\ \label{cup_empirical_cdf}
V_\sigma\Big(\frac{i}{n},\frac{j}{n}\Big) &=
 \frac{1}{n}\sum_{t=1}^{n} \mathds{1}[\, t \leqslant i \, \text{ or } \, \sigma(t) \leqslant j \,] = \frac{i}{n} +\frac{j}{n}-F_\sigma\Big(\frac{i}{n},\frac{j}{n}\Big).
\end{align}
\end{definition}

In order to state the law of large numbers, we first define the limiting permuton.
\begin{definition}[Permuton arising from $\gt$]\label{def_permuton_from_gt}
Given $\gt$ as in Definition \ref{def_gen_perm_model}, let us  define the function $V_{\gt}: [0,1)\times [0,1) \to \mathbb{R}$ as follows. For each $y \in [0,1)$, let $V_{\gt}(\cdot,y)$ denote the unique solution of the initial value problem
\begin{align}\label{Vdef}
V_{\gt}(0,y) = y, \qquad \partial_x V_{\gt}(x,y) = \frac{\gt(V_{\gt}(x,y))}{\gt(x)}.
\end{align}
 Noting that \eqref{Vdef} is a separable ODE, for any $(x,y)\in[0,1)^2$ we obtain
    \begin{align}\label{V_in_terms_of_u}
        V_{\gt}(x,y) = u^{-1}(u(x) + u(y)), \quad
\text{where} \quad u(x) = \int_0^x \frac{1}{\gt(z)}\, \mathrm{d}z.
\end{align}
Our assumptions on $\gt$ (cf.\ Definition \ref{def_gen_perm_model}) imply $\lim_{x \to 1_-} u(x)=+\infty$, thus $V_{\gt}$ continuously extends to $[0,1]^2$ if we define $V_{\gt}(1,y)=V_{\gt}(x,1)=1$. Defining
\begin{equation}\label{def_eq_F_gt}
F_{\gt}(x,y):=x+y-V_{\gt}(x,y)\end{equation}
we obtain
$F_{\gt}(x,0) = 0$, $ F_{\gt}(0,y) = 0 $, $F_{\gt}(x,1) = x$,  $ F_{\gt}(1,y) = y $ and
\begin{equation}\label{joint_pdf_formula}
f_{\gt}(x,y):=  \partial_x \partial_y F_{\gt}(x,y)= \frac{-\gt'\left(V_{\gt}(x,y) \right) \gt\left( V_{\gt}(x,y) \right) }{\gt(x)\gt(y)} \geq 0, \qquad  x,y \in [0,1),
\end{equation}
thus one sees that $F_{\gt}$ is the c.d.f.\ (and therefore $V_{\gt}$ is the $\cup$-c.d.f.) of a permuton $\mu_{\gt}$.
\end{definition}

Our first main result states that the sequence of random permutations $\sigma_n$ from Definition \ref{def_gen_perm_model} converges in probability to the deterministic permuton $\mu_{\gt}$.

\begin{theorem}[Law of large numbers in permuton space]\label{permuton_convergence_thm} Given $\gt$ and $\sigma_n \sim \mathrm{PERM}(\gt,n)$ as in Definition \ref{def_gen_perm_model}, let $F_{\sigma_n}$ denote the (random) c.d.f.\ of the (random) permuton corresponding to $\sigma_n$ (cf.\ Definition \ref{def_permutation_permuton}). We have
    \begin{align}\label{cdf_convergence_eq}
        F_{\sigma_n}(x,y) \stackrel{\mathbb{P}}{\longrightarrow} F_{\gt}(x,y), \quad n \to \infty, \quad x,y \in [0,1].
    \end{align}
\end{theorem}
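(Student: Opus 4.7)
The plan is to prove convergence of the $\cup$-c.d.f.\ $V_{\sigma_n}$, which via $V=x+y-F$ is equivalent to \eqref{cdf_convergence_eq}. Fix $y\in[0,1)$ and $j_n:=\lfloor ny\rfloor$, and set $\mathcal{F}_{i-1}:=\sigma(\Ct^{(n),1},\dots,\Ct^{(n),i-1})$. By construction $\sigma_n(i)\leq j_n$ iff $\Ct^{(n),i}\leq h_i$, where $h_i:=|[j_n]\cap D_i^{(n)}|$ is the number of ``low'' cards left in the deck before step $i$; moreover \eqref{cup_empirical_cdf} yields $h_i/n=V_{\sigma_n}((i-1)/n,j_n/n)-(i-1)/n$. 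Summing \eqref{nu_prob_def_eq} over $l=1,\dots,h_i$ and telescoping then gives
\begin{equation*}
\P\bigl(\sigma_n(i)\leq j_n\,\big|\,\mathcal{F}_{i-1}\bigr)=1-\frac{\gt\bigl(V_{\sigma_n}((i-1)/n,j_n/n)\bigr)}{\gt((i-1)/n)}.
\end{equation*}
Since the one-step increment $V_{\sigma_n}(i/n,j_n/n)-V_{\sigma_n}((i-1)/n,j_n/n)$ equals $\tfrac{1}{n}\bigl(1-\mathds{1}[\sigma_n(i)\leq j_n]\bigr)$, its conditional mean is exactly one Euler step of length $1/n$ of the ODE \eqref{Vdef} defining $V_{\gt}$.

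With this structure identified, I would perform the Doob decomposition $V_{\sigma_n}(i/n,j_n/n)=y+A_i+N_i$, where $A_i$ is the predictable compensator assembled from the conditional means above and $N_i$ is a martingale with $|N_i-N_{i-1}|\leq 1/n$. Azuma--Hoeffding gives $\P(|N_i|>\varepsilon)\leq 2e^{-\varepsilon^2 n/2}$ for each $i\in[n]$, and a union bound yields $\max_{i\leq n}|N_i|\to 0$ in probability. What remains is the deterministic task of comparing the random Euler trajectory $A_i$ with the true solution $V_{\gt}(i/n,y)$.

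For that comparison I would fix small $\eta,\eta'>0$ with $V_{\gt}(1-\eta,y)+\eta'<1$, restrict to $i/n\leq 1-\eta$, and work on the strip $\Omega_{\eta,\eta'}:=\{(x,v):x\in[0,1-\eta],\,v\in[0,V_{\gt}(1-\eta,y)+\eta']\}$. On $\Omega_{\eta,\eta'}$ both $\gt(x)$ and $\gt(v)$ are bounded below by positive constants, so the field $(x,v)\mapsto\gt(v)/\gt(x)$ from \eqref{Vdef} is Lipschitz. Combining the $O(1/n)$ local truncation error of the Euler scheme with the discrete Gronwall inequality yields $\max_{i/n\leq 1-\eta}|A_i-V_{\gt}(i/n,y)|=O(1/n)$, while simultaneously bootstrapping that $A_i$ stays in $\Omega_{\eta,\eta'}$ for $n$ large. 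Together with the martingale estimate this proves $V_{\sigma_n}(x,y)\to V_{\gt}(x,y)$ in probability for $(x,y)\in[0,1-\eta]^2$; the exact boundary values $V_{\sigma_n}(1,y)=V_{\gt}(1,y)=1$ and $V_{\sigma_n}(x,1)=V_{\gt}(x,1)=1$ dispose of the edges, and non-lattice $(x,y)$ are reached via the $1$-Lipschitz continuity of any $\cup$-c.d.f.\ in each variable.

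The main obstacle is precisely the singularity $\gt(1)=0$: the ODE field blows up as $x\uparrow 1$, so no single Gronwall estimate covers the full square $[0,1]^2$. Truncating to $x\leq 1-\eta$ fixes this but introduces the self-consistent a priori requirement that the discrete trajectory $A_i$ does not escape $\Omega_{\eta,\eta'}$ before time $\lfloor(1-\eta)n\rfloor$; this is a standard bootstrap combining Gronwall with the Azuma bound. The rest is routine Wormald-style differential-equation analysis, adapted to the fact that both the drift and the noise at step $i$ depend on the history through the single scalar $V_{\sigma_n}((i-1)/n,j_n/n)$.
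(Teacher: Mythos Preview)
Your approach is essentially the same as the paper's: both identify the conditional one-step increment of $V_{\sigma_n}$ as an Euler step of the ODE \eqref{Vdef}, perform a Doob decomposition, control the martingale part by a concentration inequality (you use Azuma--Hoeffding, the paper uses Doob's $L^2$ maximal inequality), and close with a discrete Gronwall argument after truncating to $x\leq 1-\varepsilon$. Two minor remarks: the paper actually proves the stronger tightness of $\sqrt{n}\bigl(V_{\sigma_n}-V_{\gt}\bigr)$ (Proposition~\ref{prop_tight}), and your $v$-strip bootstrap is unnecessary, since $\|\gt'\|_\infty<\infty$ already makes $v\mapsto \gt(v)/\gt(x)$ globally Lipschitz on $[0,1]$ once $x\leq 1-\eta$.
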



\begin{corollary}[Mallows and $k$CM limiting permutons]
$ $
\begin{itemize}
\item In the case of the Mallows model (cf.\ \eqref{eq_g_Mallows}) the formula \eqref{V_in_terms_of_u} becomes
\begin{equation}\label{mallows_permuton_limit}
    V_{\gt}(x,y) = \begin{cases} \frac{1}{\beta} \ln\left( \frac{e^{\beta(x+1)}+e^{\beta(y+1)}-e^{\beta(x+y)}-e^{\beta}  }{e^{\beta}-1} \right) & \text{ if } \; \; \beta \neq 0, \\
     x+y-xy  & \text{ if } \; \; \beta =0.
   \end{cases}
    \end{equation}
 \item    In the case of the $k$CM model (cf.\ \eqref{eq_g_kCM}) the formula \eqref{V_in_terms_of_u} becomes \begin{equation}\label{kcm_permuton_limit}
     V_{\gt}(x,y) = \begin{cases} 1 - ((1-x)^{1-k} + (1-y)^{1-k} -1)^{\frac{1}{1-k}} & \text{ if } \; \; k =2,3,\dots, \\
     x+y-xy  & \text{ if } \; \; k =1.
     \end{cases}
     \end{equation}
     \end{itemize}
\end{corollary}
Note that in the $\beta=0$ case of \eqref{mallows_permuton_limit} and in the $k=1$ case of \eqref{kcm_permuton_limit}  the function $V_{\gt}$ is the $\cup$-c.d.f.\ of the \emph{uniform permuton} (i.e., $\mu_{\gt}$ is the Lebesgue measure on $[0,1]^2$), corresponding to the fact that in both of these cases we have $\gt(x)=1-x$, and thus
the random permutation $\sigma_n \sim \mathrm{PERM}(\gt,n)$ generated according to
Definition \ref{def_gen_perm_model}
is actually uniformly distributed on $S_n$.

Note that the permuton limit  of the Mallows model has already been identified in \cite[Theorem 1.1]{starr} (before the invention of permutons)  and the p.d.f.\ $f_{\gt}(x,y)$ (cf. \eqref{joint_pdf_formula}) that we obtain from \eqref{mallows_permuton_limit} coincides with the p.d.f.\ $u(x,y)$ identified in \cite[Theorem 1.1]{starr}. In other words, we give an alternative proof of \cite[Theorem 1.1]{starr} using different methods. Our Theorem \ref{permuton_convergence_thm} is new in the case of the $k$CM model.

\begin{remark}[Symmetry] Note that if $\sigma_n \sim \mathrm{PERM}(\gt,n)$ then $\sigma^{-1}_n \sim \mathrm{PERM}(\gt,n)$
(cf.\ Proposition \ref{inverse_cor}). Putting this together with the identity
$F_{\sigma_n^{-1}}(x,y)\equiv F_{\sigma_n}(y,x)$ and the convergence result \eqref{cdf_convergence_eq} we see that $ F_{\gt}(x,y)\equiv F_{\gt}(y,x)$ must hold even without knowing the explicit formula for  $F_{\gt}$
(cf.\ \eqref{V_in_terms_of_u}, \eqref{def_eq_F_gt}).
\end{remark}

If $\tau \in S_k$ and $\sigma \in S_n$ and $k \leqslant n$ then we define the pattern density $t(\tau,\sigma)$ to be the fraction of $k$-tuples  $1 \leqslant i_1<\dots<i_k \leqslant n$ for which the relative order of $\sigma(i_1), \dots, \sigma(i_n)$  is given by the permutation $\tau$ (see \cite[Definition 1.1]{the_paper}). If $\mu$ is a permuton then we define $t(\tau,\mu)$ to be the probability of the event that the relative order of $Y_1,\dots,Y_k$ with respect to $X_1,\dots, X_k$ is given by $\tau$, where $(X_1,Y_1), \dots, (X_k,Y_k)$ are i.i.d.\ elements of $[0,1]^2$ with distribution $\mu$ (see \cite[Definitions 1.4, 1.5]{the_paper}).

\begin{corollary}[Law of large numbers for pattern densities]
\label{lln_pattern}
Theorem \ref{permuton_convergence_thm} and \cite[Lemma 5.3]{the_paper} together
imply that under the assumptions of Theorem \ref{permuton_convergence_thm} we have
\begin{equation}\label{lln_pattern_eq}
t(\tau, \sigma_n) \stackrel{\mathbb{P}}{\longrightarrow} t(\tau, \mu_{\gt}), \quad n \to \infty, \quad \tau \in S_k, \; k \in \mathbb{N}.
\end{equation}
\end{corollary}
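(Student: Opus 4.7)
My plan is to transfer the pointwise convergence of the empirical c.d.f.\ in Theorem \ref{permuton_convergence_thm} into convergence of pattern densities by routing through weak convergence of the associated permutons, and then invoking the cited lemma.

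First, I would observe that $F_{\gt}$ is continuous on $[0,1]^2$: on the open square $[0,1)^2$ the density formula \eqref{joint_pdf_formula} gives a locally bounded joint p.d.f., while the boundary values $F_{\gt}(x,1)=x$, $F_{\gt}(1,y)=y$, $F_{\gt}(x,0)=F_{\gt}(0,y)=0$ match continuously. Being continuous on a compact set, $F_{\gt}$ is uniformly continuous. The convergence \eqref{cdf_convergence_eq} is stated pointwise in $(x,y)$, but I would upgrade it to uniform convergence in probability on $[0,1]^2$ via a Pólya-type argument: given $\varepsilon>0$, choose a finite grid $G\subset[0,1]^2$ fine enough that $F_{\gt}$ varies by at most $\varepsilon/3$ across each cell. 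Since $G$ is finite, a union bound combined with \eqref{cdf_convergence_eq} gives $\max_{p\in G}|F_{\sigma_n}(p)-F_{\gt}(p)|<\varepsilon/3$ with probability tending to $1$. Exploiting that both $F_{\sigma_n}$ and $F_{\gt}$ are coordinatewise non-decreasing, the value at any $(x,y)\in[0,1]^2$ is sandwiched between the values at the two nearest grid corners, so the sup-norm deviation is at most $\varepsilon$ on the same high-probability event.

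Uniform convergence of c.d.f.'s on the compact space $[0,1]^2$ toward a continuous limit c.d.f.\ is equivalent to weak convergence of the corresponding probability measures; hence $\mu_{\sigma_n}$ converges weakly to $\mu_{\gt}$ in probability. At this point I would invoke \cite[Lemma 5.3]{the_paper}, which supplies two facts: (a) the functional $\mu\mapsto t(\tau,\mu)$ is continuous with respect to the weak topology on permutons, and (b) for every $\sigma\in S_n$ one has $|t(\tau,\sigma)-t(\tau,\mu_\sigma)|=O_k(1/n)$. From (a) and the permuton convergence just established, $t(\tau,\mu_{\sigma_n})\stackrel{\mathbb{P}}{\to} t(\tau,\mu_{\gt})$; from (b) we may replace $t(\tau,\mu_{\sigma_n})$ by $t(\tau,\sigma_n)$ up to a deterministic error vanishing in $n$, giving \eqref{lln_pattern_eq}.

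The only nontrivial step is the passage from pointwise-in-probability convergence of $F_{\sigma_n}$ to uniform-in-probability convergence (equivalently, to weak convergence of permutons in probability); the Pólya-style grid argument above handles it in a routine way, so I do not anticipate a serious obstacle. Everything else is a direct appeal to the cited lemma.
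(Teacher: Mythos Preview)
Your proposal is correct and follows the intended line: the paper gives no separate proof for this corollary, simply declaring it an immediate consequence of Theorem~\ref{permuton_convergence_thm} together with \cite[Lemma~5.3]{the_paper}. Your P\'olya-type upgrade from pointwise to uniform convergence in probability, followed by the appeal to continuity of $\mu\mapsto t(\tau,\mu)$ and the $O_k(1/n)$ discrepancy between $t(\tau,\sigma)$ and $t(\tau,\mu_\sigma)$, is exactly the standard way to unpack that implication.
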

In particular, if $\tau \in S_2$ is the permutation that swaps $1$ and $2$ then the density $t(\tau,\mu_{\gt})$ of inversions in $\mu_{\gt}$ can be shown to be equal to  $ 2 \int_{0}^1 u(s) \gt(s) \text{ds}$ (where $u$ is defined in \eqref{V_in_terms_of_u}), and if we choose $\gt(x) = (1-x)^k$ (cf.\ \eqref{eq_g_kCM}) then
$ 2 \int_{0}^1 u(s) \gt(s) \text{ds}= \frac{1}{k+1} $ and thus
Corollary \ref{lln_pattern} recovers \cite[Theorem 2.2]{Travers}, i.e., the weak law of large numbers for the number of inversions
 in the $k$CM model. Note that the method of proof of \cite[Theorem 2.2]{Travers} does not seem to generalize to the case of \eqref{lln_pattern_eq} when $\tau \in S_k$,  $k \geq 3$.
 Also note that the special case of \eqref{lln_pattern_eq} for the Mallows model is stated in \cite[Section 1.2.2]{Dubach}.


If we denote by $V_\beta(x,y)$ the $\cup$-c.d.f.\ that appears in  \eqref{mallows_permuton_limit} and by $\mu_\beta$ the corresponding permuton then $\lim_{\beta \to \infty} V_\beta(x,y)=\max\{x,y\}$, thus the weak limit of the probability measures $\mu_\beta$ as $\beta \to \infty$ is the \emph{identity permuton}, i.e., the law of the distribution of $(X,X)$, where $X \sim \mathrm{UNI}[0,1]$.
The same thing happens if we let $k \to \infty$ in \eqref{kcm_permuton_limit}. Our next result (Theorem \ref{logistic_limit_thm}) is a more precise description of the  concentration of these measures around the diagonal.

\begin{assumption}[Convergence of scaled logarithmic derivative]\label{assumption_kk} For each $\kk \geq 1$, let $\gt_{\kk}$ denote a function that satisfies the assumptions for $\gt$ in Definition \ref{def_gen_perm_model}.
Let $\gamma: [1,+\infty) \to \mathbb{R}_+$ denote a scaling function.
For each $\kk \geq 1$, let
       \begin{align}\label{def_psi_k}
        \psi_\kk(s) :=  - \frac{1}{\gamma(\kk)} \cdot \frac{\gt_{\kk}'(s)}{\gt_{\kk}(s)} , \qquad s \in [0,1).
    \end{align}
    Let $\psi: [0,1) \to \mathbb{R}$. We assume  that
\begin{enumerate}[(i)]
\item\label{ass_gammak_to_infty} $\lim_{\kk \to \infty} \gamma(\kk)=+\infty$,
 \item\label{ass_psik_unif_equico}  $\psi_{\kk}(\cdot),\; \kk \geq 1$ are uniformly equicontinuous on compact subintervals of $[0,1)$,
\item \label{ass_gammak_unif_conv} $\psi_{\kk} \longrightarrow \psi$ uniformly on each compact subinterval of $[0,1)$ as $\kk \to \infty$,

     \item \label{ass_psi_pos} $\psi(s)>0$ for all $s \in [0,1)$.
    \end{enumerate}
     Note that it follows from the above assumptions that the limit $\psi$ is continuous on $[0,1)$.
\end{assumption}
\begin{remark}[Scaling of logarithmic derivatives of Mallows and $k$CM models]\label{remark_kcm_mallows_satisfy_assump} $ $
\begin{itemize}
\item If
$   \gt_\kk(s) = \frac{e^{\kk\cdot (1-s)}-1}{e^{\kk}-1}$ (cf.\ \eqref{eq_g_Mallows}) then
the conditions of Assumption \ref{assumption_kk} are satisfied if we choose $\gamma(\kk)=\kk$ and in this case we have $\psi(s)= 1$ for all $s \in [0,1)$.
\item
  If  $\gt_\kk(s)=(1-s)^{\kk}$ (cf.\ \eqref{eq_g_kCM}) then
the conditions of Assumption \ref{assumption_kk} are satisfied if we choose $\gamma(\kk)=\kk$ and in this case we have $\psi(s)= \frac{1}{1-s}$ for all $s \in [0,1)$.
\end{itemize}
\end{remark}

\begin{theorem}[Logistic limit theorem]
\label{logistic_limit_thm}
 Let $\gt_{\kk}, \kk \geq 1$
satisfy Assumption \ref{assumption_kk}. Let
us define the random variables
\begin{equation}\label{def_eq_UkVk}
\mathcal{U}_\kk := \frac{X_\kk + Y_\kk}{2}, \qquad \mathcal{V}_\kk := \gamma(\kk)\cdot \frac{X_\kk-Y_\kk}{2}, \end{equation}
where the random vector $(X_\kk, Y_\kk)$ has  distribution $\mu_{\gt_{\kk}}$ (cf.\ Definition \ref{def_permuton_from_gt}). We have
\begin{align}\label{eq_limit_thm_logistic}
    (\mathcal{U}_\kk, \mathcal{V}_\kk) \Longrightarrow \big(U, \frac{1}{\psi(U)}\cdot L\big), \qquad \kk \to \infty,
\end{align}
where $U \sim \mathrm{UNI}[0,1]$,  $L$ is independent of $U$ and $L$ has p.d.f. $\frac{2}{(e^t + e^{-t})^2}, \, t \in \mathbb{R}$.
\end{theorem}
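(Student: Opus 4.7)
My plan is to prove the convergence \eqref{eq_limit_thm_logistic} by showing that the joint probability density of $(U_\kk, V_\kk)$ converges pointwise (almost everywhere) to the joint density of the limit random vector, and then invoking Scheff\'e's lemma to upgrade this to total variation convergence (which implies weak convergence). The change of variables $(x,y)\mapsto(u,v)=((x+y)/2, \gamma(\kk)(x-y)/2)$ has Jacobian of absolute value $\gamma(\kk)/2$, so combining \eqref{joint_pdf_formula} with the relation $-\gt_\kk'(s)/\gt_\kk(s)=\gamma(\kk)\psi_\kk(s)$ from \eqref{def_psi_k}, the density of $(U_\kk,V_\kk)$ at $(u,v)$ equals
\[
g_\kk(u,v)\;=\;2\,\psi_\kk(W_\kk)\,\frac{\gt_\kk(W_\kk)^{2}}{\gt_\kk(x_\kk)\,\gt_\kk(y_\kk)}, \qquad W_\kk:=V_{\gt_\kk}(x_\kk,y_\kk),
\]
where $x_\kk:=u+v/\gamma(\kk)$ and $y_\kk:=u-v/\gamma(\kk)$. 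A routine computation shows that the joint density of $(U,L/\psi(U))$ is $g(u,v)=\mathds{1}_{[0,1]}(u)\cdot\psi(u)/(2\cosh^{2}(\psi(u)\,v))$, so the task reduces to proving $g_\kk(u,v)\to g(u,v)$ for all $(u,v)\in(0,1)\times\mathbb{R}$.

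The key analytic step is a Laplace-type asymptotic for $u_\kk(x):=\int_0^x\mathrm{d}z/\gt_\kk(z)$ (cf.\ \eqref{V_in_terms_of_u}). Writing $\gt_\kk(s)=\exp(-\gamma(\kk)\Psi_\kk(s))$ with $\Psi_\kk(s):=\int_0^s\psi_\kk(t)\,\mathrm{d}t$ and substituting $z=x-w/\gamma(\kk)$, Assumption \ref{assumption_kk} yields
\[
\gamma(\kk)\,\psi(x)\,e^{-\gamma(\kk)\Psi_\kk(x)}\,u_\kk(x)\;\longrightarrow\;1 \qquad(\kk\to\infty),
\]
uniformly on compact subsets of $(0,1)$. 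Combined with the identity $\gamma(\kk)[\Psi_\kk(u\pm v/\gamma(\kk))-\Psi_\kk(u)]=\int_0^{\pm v}\psi_\kk(u+s/\gamma(\kk))\,\mathrm{d}s\to\pm\psi(u)v$, summing this at $x_\kk$ and $y_\kk$ gives
\[
\gamma(\kk)\,\psi(u)\,e^{-\gamma(\kk)\Psi_\kk(u)}\bigl(u_\kk(x_\kk)+u_\kk(y_\kk)\bigr)\;\longrightarrow\;2\cosh(\psi(u)\,v).
\]
Using the representation $V_{\gt_\kk}(x,y)=u_\kk^{-1}(u_\kk(x)+u_\kk(y))$ from \eqref{V_in_terms_of_u} and applying the same asymptotic at $W_\kk$, I obtain $W_\kk\to u$ together with the refined expansion $\gamma(\kk)\,\psi(u)\,(W_\kk-u)\to\log(2\cosh(\psi(u)\,v))$.

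Plugging these asymptotics back into $g_\kk$, the logarithmic ratio
\[
\gamma(\kk)\bigl[\Psi_\kk(x_\kk)+\Psi_\kk(y_\kk)-2\Psi_\kk(W_\kk)\bigr]\;\longrightarrow\;-2\log(2\cosh(\psi(u)\,v)),
\]
while $\psi_\kk(W_\kk)\to\psi(u)$ by Assumption \ref{assumption_kk}(\ref{ass_gammak_unif_conv}). Together these yield $g_\kk(u,v)\to 2\psi(u)/(2\cosh(\psi(u)v))^{2}=g(u,v)$, and since $g_\kk$ and $g$ are both probability densities, Scheff\'e's lemma converts this pointwise convergence into $L^{1}$ convergence, which implies \eqref{eq_limit_thm_logistic}. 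The main obstacle is making the Laplace asymptotic for $u_\kk$ and the subsequent inversion $u_\kk^{-1}$ sufficiently uniform in $x$ (and in $\kk$) that the errors survive composition into the ratio $\gt_\kk(W_\kk)^{2}/(\gt_\kk(x_\kk)\gt_\kk(y_\kk))$, whose numerator and denominator are both exponentially large in $\gamma(\kk)$; the uniform equicontinuity in Assumption \ref{assumption_kk}(\ref{ass_psik_unif_equico}) is precisely the quantitative control needed to keep these compounded errors $o(1)$.
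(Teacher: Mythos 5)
Your proposal is correct and follows essentially the same path as the paper: Scheff\'e's lemma applied to pointwise convergence of the joint density of $(U_\kk,V_\kk)$, with the key ingredients being the Laplace-type asymptotic $\gamma(\kk)\,\gt_\kk(x)\,u_\kk(x)\to 1/\psi(x)$ (the paper's Lemma \ref{lemma_u_k_x}), the ratio estimate for $\gt_\kk(x+y/\gamma(\kk))/\gt_\kk(x)$ (the paper's \eqref{psi_exp}, equivalent to your identity for $\gamma(\kk)[\Psi_\kk(u\pm v/\gamma(\kk))-\Psi_\kk(u)]$), the resulting expansion of $W_\kk=V_{\gt_\kk}(x_\kk,y_\kk)$ around $u$ (the paper's Lemma \ref{lemma_ugly_log}), and finally the substitution $-\gt_\kk'/\gt_\kk=\gamma(\kk)\psi_\kk$ to make the surviving factor $2\psi_\kk(W_\kk)\to 2\psi(u)$. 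Your closing remark correctly identifies the place where care is needed — the uniformity (on compact subsets of $[0,1)$, via equicontinuity and uniform convergence of $\psi_\kk$) that makes the exponentially large numerator/denominator in $\gt_\kk(W_\kk)^2/(\gt_\kk(x_\kk)\gt_\kk(y_\kk))$ cancel with controlled error — and this is exactly what the paper's estimates \eqref{psi_exp}, \eqref{eq_u_kk_x}, \eqref{ugly_logarithm} supply.
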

Note that the random variable $L$ has \emph{logistic distribution} and the c.d.f.\ of $L$ is the logistic curve: $\mathbb{P}(L \leqslant x)=\frac{1}{1+e^{-2x}}, \, x \in \mathbb{R}$. In plain words,
Theorem \ref{logistic_limit_thm} says that if $\kk$ is large then the mass of the distribution of $(X_\kk, Y_\kk)$ is mostly concentrated in a thin tube of width proportional to $\frac{1}{\gamma(\kk)}$ around the diagonal (and therefore the distribution of $\mathcal{U}_\kk$ is close to $\mathrm{UNI}[0,1]$),  and if we condition
on $\mathcal{U}_\kk=s$ (where $s \in [0,1)$) then the conditional distribution of $\psi(s) \cdot \mathcal{V}_\kk$ is close to that of $L$.
The result of Theorem \ref{logistic_limit_thm} (together with Remark \ref{remark_kcm_mallows_satisfy_assump}) confirms the conjecture of Travers formulated in \cite[Section 6]{Travers} about the similarity of the band structure of the
$k$CM and the $(n,q)$-Mallows models around the diagonal
 when $q \approx 1-\frac{k}{n}$ and $k$ is large,  see Figure \ref{fig:band} for an illustration.

\begin{figure}[t]
\centering
\includegraphics[width=13cm]{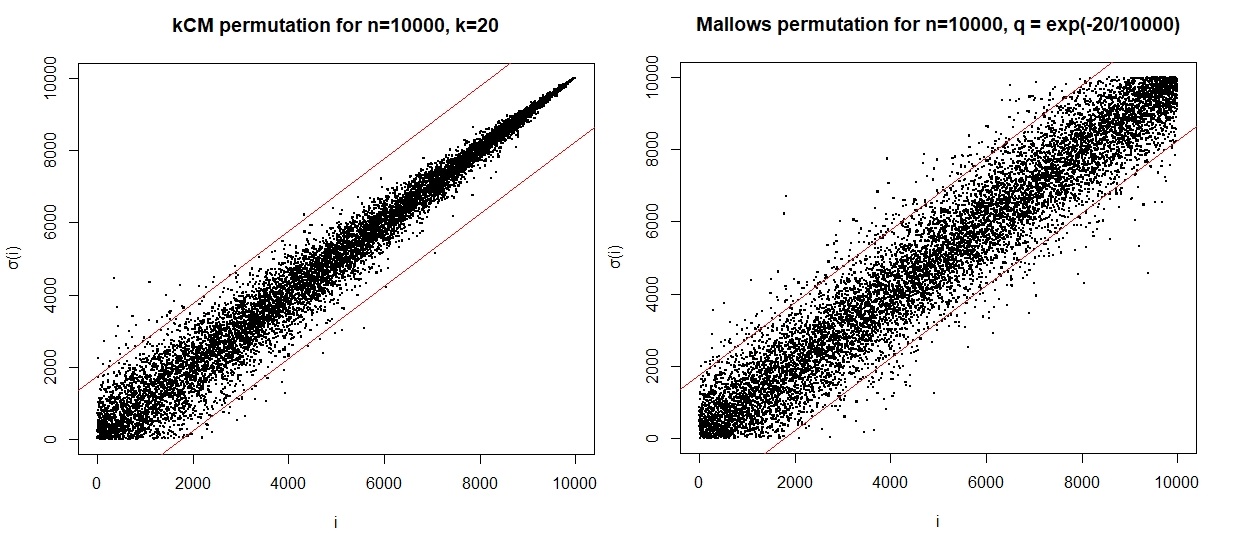}\vspace{-0.5em}
\caption{The image on the left is the matrix of a random permutation generated according to the $k$CM with parameters $n=10^4$ and $k=20$. The image on the right is the matrix of a random permutation generated according to the $(n,q)$-Mallows  model with parameters $n=10^4$ and $q=e^{\frac{-\beta}{n}}$, where $\beta=20$. Note the visible difference: $\psi(s)=\frac{1}{1-s}$ on the left, while $\psi(s)\equiv 1$ on the right, cf.\ Remark \ref{remark_kcm_mallows_satisfy_assump}.}
\label{fig:band}
\end{figure}

Note that the logistic distribution also appeared in \cite[Proposition 6.1]{bclogistic} as the scaling limit  of the displacement of particles in the context of Mallows product measures (a certain family of $q$-exchangeable probability measures on bijections $\mathbb{Z} \to \mathbb{Z}$) as $q \to 1$.

\begin{remark}[CLT]
In an upcoming paper we plan to prove that
$ \sqrt{n} \cdot \left( F_{\sigma_n}- F_{\gt} \right)$ (cf.\ \eqref{cdf_convergence_eq}) weakly converges to a centered Gaussian process defined on $[0,1]^2$ as $n \to \infty$ (generalizing the recent result of \cite{Bufetov_clt} pertaining to the Mallows model) and to derive that $\sqrt{n} \cdot \left(t(\tau, \sigma_n) - t(\tau, \mu_{\gt})\right) \Rightarrow \mathcal{N}(0,\sigma^2)$ (cf.\ \eqref{lln_pattern_eq}) for some $\sigma=\sigma(\gt,\tau) \in \mathbb{R}_+$  as a corollary (generalizing \cite[Theorem 2.3]{Travers}, i.e., the CLT for the number of inversions in the $k$CM). Note that our Proposition \ref{prop_tight} can be viewed as a first step in this direction.
\end{remark}

\section{Law of large numbers in permuton space}
In this section we prove  Theorem \ref{permuton_convergence_thm}. The key observation is the following lemma.
\begin{definition}[Natural filtration]\label{filtration}
Let $\sigma_n \in S_n$ be a $\gt$-random permutation.
    For $i=0,1, 2, \ldots n$ 
    let
    $\mathcal{F}^n_{i}$  denote the $\sigma$-field generated by $\sigma_n(1), \dots, \sigma_n(i)$.
\end{definition}
Recalling Definition \ref{def_permutation_permuton} we see that for any $j \in [n]$ the random variables $F_{\sigma_n}\big(\frac{i}{n},\frac{j}{n}\big)$
and $V_{\sigma_n}\big(\frac{i}{n},\frac{j}{n}\big)$ are both
$\mathcal{F}^n_{i}$-measurable.

A variant of the following lemma in the case of the Mallows model is \cite[Lemma 3.16]{GP}.
\begin{lemma}[Markov property]
  Let $\sigma_n \in S_n$ be a $\gt$-random permutation. Then for any $j \in [n]$ the sequence of random variables $V_{\sigma_n}\big(\frac{1}{n},\frac{j}{n}\big), V_{\sigma_n}\big(\frac{2}{n},\frac{j}{n}\big), \dots, V_{\sigma_n}\big(\frac{n}{n},\frac{j}{n}\big) $ (cf.\ Definition \ref{def_permutation_permuton}) forms a Markov chain. Specifically, for any $i,j \in [n]$ we have
   \begin{align}\label{cup_expectation}
  \mathbb{E}\Big(V_{\sigma_n}\Big(\frac{i}{n}, \frac{j}{n}\Big) - V_{\sigma_n}\Big(\frac{i-1}{n},\frac{j}{n}\Big)\: \big| \: \mathcal{F}_{i-1}^{n}\Big) = \frac{1}{n} \cdot \frac{\gt\Big(V_{\sigma_n}(\frac{i-1}{n}, \frac{j}{n})\Big)}{\gt(\frac{i-1}{n})}.
   \end{align}
\end{lemma}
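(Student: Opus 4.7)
My plan is to compute the single-step increment $V_{\sigma_n}(i/n,j/n)-V_{\sigma_n}((i-1)/n,j/n)$ and show that its conditional distribution given $\mathcal{F}^n_{i-1}$ depends only on $V_{\sigma_n}((i-1)/n,j/n)$, which will simultaneously yield the Markov property and the explicit formula \eqref{cup_expectation}. The increment is $\{0,1/n\}$-valued, so its conditional distribution is pinned down by its conditional mean, and the Markov structure follows from whatever dependence on the past this mean exhibits.

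First I would read off from \eqref{cup_empirical_cdf} that, because the indicator $\mathds{1}[t\leqslant i\text{ or }\sigma_n(t)\leqslant j]$ in the defining sum only changes at $t=i$ when we go from $(i-1)/n$ to $i/n$, we get the clean identity
\begin{equation*}
V_{\sigma_n}\!\Big(\tfrac{i}{n},\tfrac{j}{n}\Big)-V_{\sigma_n}\!\Big(\tfrac{i-1}{n},\tfrac{j}{n}\Big)=\tfrac{1}{n}\,\mathds{1}[\sigma_n(i)>j].
\end{equation*}
Next I would rewrite $V_{\sigma_n}((i-1)/n,j/n)$ via inclusion--exclusion applied to the same formula to obtain $n\cdot V_{\sigma_n}((i-1)/n,j/n)=(i-1)+K$, where $K:=|D^{(n)}_i\cap[j]|$, because $|\{t\leqslant i-1:\sigma_n(t)\leqslant j\}|=j-K$. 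Conditionally on $\mathcal{F}^n_{i-1}$, the set $D^{(n)}_i$ is determined, and since the remaining deck is ordered by label from the bottom up, the bottom $K$ positions of $D^{(n)}_i$ hold precisely the cards with labels in $[j]$, so $\{\sigma_n(i)>j\}=\{\Ct^{(n),i}>K\}$.

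Now I would use the independence of $\Ct^{(n),i}$ from $\mathcal{F}^n_{i-1}$ together with the telescoping structure built into \eqref{nu_prob_def_eq}: summing the increments from $l=K+1$ up to $l=n-(i-1)$ yields
\begin{equation*}
\mathbb{P}\!\left(\Ct^{(n),i}>K\,\big|\,\mathcal{F}^n_{i-1}\right)=\frac{\gt\!\left(\frac{i-1+K}{n}\right)-\gt(1)}{\gt\!\left(\frac{i-1}{n}\right)}=\frac{\gt\!\left(\frac{i-1+K}{n}\right)}{\gt\!\left(\frac{i-1}{n}\right)},
\end{equation*}
using $\gt(1)=0$. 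Substituting $(i-1+K)/n=V_{\sigma_n}((i-1)/n,j/n)$ and multiplying by $1/n$ gives \eqref{cup_expectation}. Since the right-hand side is a measurable function of $V_{\sigma_n}((i-1)/n,j/n)$ alone, and the increment is a Bernoulli random variable (scaled by $1/n$) whose law is determined by its mean, the conditional distribution of $V_{\sigma_n}(i/n,j/n)$ given $\mathcal{F}^n_{i-1}$ coincides with its conditional distribution given $V_{\sigma_n}((i-1)/n,j/n)$, establishing the Markov property.

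I do not foresee a substantive obstacle: every step is bookkeeping. The only point that needs care is the identification in the second step, where one must verify that the ordering convention ``counted from the bottom'' in Definition \ref{def_gen_perm_model} forces the lowest $K$ slots of $D^{(n)}_i$ to be occupied by the cards whose labels lie in $[j]$; once that is in hand, the telescoping computation and the translation between $K$ and $V_{\sigma_n}$ are immediate.
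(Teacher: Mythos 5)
Your proof is correct and takes essentially the same approach as the paper: both identify the increment as $\tfrac{1}{n}\mathds{1}[\sigma_n(i)>j]$, both translate the event $\{\sigma_n(i)>j\}$ into $\{\Ct^{(n),i}>|D^{(n)}_i\cap[j]|\}$, both telescope the tail probability of $\Ct^{(n),i}$ from \eqref{nu_prob_def_eq}, and both identify the resulting argument of $\gt$ with $V_{\sigma_n}(\tfrac{i-1}{n},\tfrac{j}{n})$ via \eqref{empirical_cdf}--\eqref{cup_empirical_cdf}. Your version just spells out the Markov-property conclusion slightly more explicitly, but the underlying computation is the same.
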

\begin{proof}
Recalling \eqref{cup_empirical_cdf} we
observe that
$n \cdot V_{\sigma_n}\Big(\frac{i}{n}, \frac{j}{n}\Big) - n \cdot V_{\sigma_n}\Big(\frac{i-1}
{n},\frac{j}{n}\Big)$ can only take two possible values: it is $1$ if
$\sigma_n(i)>j$ and $0$ otherwise. Therefore, by Definition \ref{def_gen_perm_model} we have
\begin{multline*}
n \cdot\mathbb{E}\Big( V_{\sigma_n}\Big(\frac{i}{n}, \frac{j}{n}\Big) - V_{\sigma_n}\Big(\frac{i-1}{n},\frac{j}{n}\Big)\: \big| \: \mathcal{F}_{i-1}^{n}\Big) =
      \mathbb{P}\big( \sigma_n(i)>j \, \big| \, \mathcal{F}_{i-1}^{n}\big)= \\
       \mathbb{P}\big( \Ct^{(n), i} > |D^{(n)}_i \cap [j]|  \, \big| \, \mathcal{F}_{i-1}^{n}\big)
      \stackrel{\eqref{empirical_cdf}}{=}
     \mathbb{P}\big(\Ct^{(n), i}  > j - nF_{\sigma_n}\left(\frac{i-1}{n},\frac{j}{n}\right) \, | \, \mathcal{F}_{i-1}^{n}  \big)
    \stackrel{(\bullet)}{=} \\  \frac{\gt\big(\frac{i-1}{n} + \frac{j}{n} - F_{\sigma_n}(\frac{i-1}{n}, \frac{j}{n})\big)}{\gt(\frac{i-1}{n})} \stackrel{\eqref{cup_empirical_cdf}}{=}  \frac{\gt(V_{\sigma_n}(\frac{i-1}{n}, \frac{j}{n}))}{  \gt(\frac{i-1}{n})},
\end{multline*}
where in $(\bullet)$ we used that
$F_{\sigma_n}\big(\frac{i-1}{n},\frac{j}{n}\big)$ is
$\mathcal{F}^n_{i-1}$-measurable,
$\Ct^{(n), i}$ is independent of
$\mathcal{F}_{i-1}^{n}$ and that
$\mathbb{P}(\Ct^{(n), i} \geq l)=\gt\big(\frac{(i-1)+(l-1)}{n}\big) /\gt\big( \frac{i-1}{n} \big)$  for any $l=1,\dots, n-(i-1)$
by \eqref{nu_prob_def_eq}.
\end{proof}

In order to relate \eqref{cup_expectation} to \eqref{Vdef}, we need some technical estimates.

Recall from Definition \ref{def_gen_perm_model} that we assumed that $\gt$ is continuously differentiable on $[0,1]$, thus $\Vert \gt'  \Vert_\infty= \max_{0 \leqslant x \leqslant 1} |\gt'(x)|<+\infty$. Recall the notion $V_{\gt}$  from Definition \ref{def_permuton_from_gt}.
\begin{claim}\label{claim_Lipschitz} For any $\varepsilon \in (0,1)$
and for all $x \in [0, 1-\varepsilon]$ and all $y \in [0,1]$ we have
\begin{align}\label{lip_const}
    \Big| \frac{\mathrm{d}}{\mathrm{d}x} \frac{\gt(V_{\gt}(x,y))}{\gt(x)}\Big| \leqslant \frac{ 2 \Vert \gt'  \Vert_\infty}{(\gt(1-\varepsilon))^2} =: L(\varepsilon)<+\infty.
\end{align}
\end{claim}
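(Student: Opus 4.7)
The proof of this claim is a straightforward computation, so my plan is essentially to carry out the differentiation carefully and then bound each factor.

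First I would differentiate $h(x) := g(V_g(x,y))/g(x)$ via the quotient rule, obtaining
\[
\frac{\mathrm{d}}{\mathrm{d}x}\,\frac{g(V_g(x,y))}{g(x)} \;=\; \frac{g'(V_g(x,y))\,\partial_x V_g(x,y)\cdot g(x) - g(V_g(x,y))\,g'(x)}{g(x)^2}.
\]
Next I would substitute the ODE from \eqref{Vdef}, namely $\partial_x V_g(x,y)=g(V_g(x,y))/g(x)$, which causes the factor $g(x)$ in the numerator to cancel and leaves
\[
\frac{\mathrm{d}}{\mathrm{d}x}\,\frac{g(V_g(x,y))}{g(x)} \;=\; \frac{g(V_g(x,y))\bigl[g'(V_g(x,y)) - g'(x)\bigr]}{g(x)^2}.
\]

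Then I would bound the three factors separately. Since by hypothesis $g$ maps $[0,1]$ into $[0,1]$, we have $g(V_g(x,y))\leqslant 1$. The bracket is bounded in absolute value by $2\Vert g'\Vert_\infty$. Finally, monotonicity of $g$ gives $g(x)\geqslant g(1-\varepsilon)$ for all $x\in[0,1-\varepsilon]$, so $g(x)^2 \geqslant g(1-\varepsilon)^2$. Assembling these three estimates yields \eqref{lip_const}.

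There is no real obstacle: the only ingredients are the explicit ODE satisfied by $V_g$, the elementary fact $g\leqslant 1$, and monotonicity of $g$. (One could even get the slightly sharper bound $2\Vert g'\Vert_\infty/g(1-\varepsilon)$ by noting that $V_g(x,y)\geqslant x$ — which follows from $V_g(x,0)=x$ being the unique solution of \eqref{Vdef} with $y=0$ and monotonicity of $V_g$ in $y$ — so that $g(V_g(x,y))\leqslant g(x)$, but the weaker constant $L(\varepsilon)$ in the statement suffices for the intended application.)
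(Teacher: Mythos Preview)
Your proof is correct and matches the approach indicated in the paper, which omits the details but points to exactly the same ingredients you use: the ODE \eqref{Vdef} for $\partial_x V_g$ and the fact that $g$ is strictly decreasing with $g(0)=1$, $g(1)=0$. Your parenthetical remark about the sharper bound via $V_g(x,y)\geqslant x$ is a nice observation beyond what the paper records.
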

We omit the details of the proof of Claim \ref{claim_Lipschitz} as it follows  (\ref{Vdef})  and the fact that $\gt$ is a strictly decreasing function on $[0,1]$
that satisfies $\gt(0)=1$ and $\gt(1)=0$.


\begin{claim}\label{claim_int_approx}
   For every $\varepsilon \in (0,1)$ and  $i, j \in \{1, 2, \ldots, n\}$ such that $\frac{i}{n} \leqslant 1- \varepsilon$, we have
    \begin{align}\label{eq_L_eps_bound}
        \Big|\int_{\frac{i-1}{n}}^{\frac{i}{n}}\frac{\gt\big(V_{\gt}(x,\frac{j}{n})\big)}{\gt(x)} \mathrm{dx} - \frac{1}{n}\cdot \frac{\gt\big(V_{\gt}(\frac{i-1}{n}, \frac{j}{n})\big)}{\gt(\frac{i-1}{n})}\Big| \leqslant \frac{1}{n^2}
\frac{L(\varepsilon)}{2}.
    \end{align}
\end{claim}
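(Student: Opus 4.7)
The plan is to recognize the claim as a standard left-endpoint (rectangle-rule) quadrature error estimate applied to the function
\[
h(x) := \frac{\gt\big(V_{\gt}(x,\tfrac{j}{n})\big)}{\gt(x)}
\]
on the sub-interval $[\tfrac{i-1}{n},\tfrac{i}{n}]$. Since by hypothesis $\tfrac{i}{n}\leqslant 1-\varepsilon$, the whole interval lies in $[0,1-\varepsilon]$, so Claim \ref{claim_Lipschitz} is directly applicable there and yields $|h'(x)|\leqslant L(\varepsilon)$ for every $x$ in this interval.

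First I would rewrite the difference inside the absolute value as a single integral:
\[
\int_{\frac{i-1}{n}}^{\frac{i}{n}} h(x)\,\mathrm{d}x - \frac{1}{n}\cdot h\!\left(\tfrac{i-1}{n}\right)
= \int_{\frac{i-1}{n}}^{\frac{i}{n}} \Big(h(x)-h\!\left(\tfrac{i-1}{n}\right)\Big)\mathrm{d}x.
\]
Next I would bound the integrand pointwise using the Lipschitz estimate: for $x\in[\tfrac{i-1}{n},\tfrac{i}{n}]$,
\[
\Big|h(x)-h\!\left(\tfrac{i-1}{n}\right)\Big| \leqslant L(\varepsilon)\cdot\Big(x-\tfrac{i-1}{n}\Big),
\]
which follows by integrating $|h'|\leqslant L(\varepsilon)$ (valid by Claim \ref{claim_Lipschitz} since the interval is contained in $[0,1-\varepsilon]$).

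Finally I would integrate this bound over $[\tfrac{i-1}{n},\tfrac{i}{n}]$, obtaining
\[
\int_{\frac{i-1}{n}}^{\frac{i}{n}} L(\varepsilon)\Big(x-\tfrac{i-1}{n}\Big)\mathrm{d}x = \frac{L(\varepsilon)}{2}\cdot\frac{1}{n^{2}},
\]
which is exactly the right-hand side of \eqref{eq_L_eps_bound}. Triangle inequality inside the outer integral transfers this pointwise bound to the absolute value of the signed error, completing the proof. There is no real obstacle here: the entire content of the claim is the elementary quadrature-error fact $|\int_a^{a+\delta} h - \delta h(a)|\leqslant \tfrac{1}{2}L\delta^{2}$, and the nontrivial input (the existence and value of the Lipschitz constant $L(\varepsilon)$) has already been provided by Claim \ref{claim_Lipschitz}.
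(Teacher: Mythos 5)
Your proof is correct and is exactly the argument the paper intends: the paper omits the proof of Claim \ref{claim_int_approx}, stating only that it ``easily follows from Claim \ref{claim_Lipschitz},'' and your rectangle-rule error estimate using the Lipschitz bound $|h'|\leqslant L(\varepsilon)$ on $[0,1-\varepsilon]$ is precisely that deduction.
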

We omit the proof of Claim \ref{claim_int_approx} as it easily follows from Claim \ref{claim_Lipschitz}.

\begin{definition}[Scaled fluctuation]\label{def_Z}
    For fixed $i,j \in \{0,1, 2, \ldots, n\}$ we denote
    \begin{align}\label{def_eq_Z}
        Z_{i,j}^n := \sqrt{n} \cdot  \Big(V_{\sigma_n}\big(\frac{i}{n}, \frac{j}{n}\big) - V_{\gt}\big(\frac{i}{n}, \frac{j}{n}\big)\Big).
    \end{align}
\end{definition}
\begin{proposition}[Tightness]\label{prop_tight}
For any $\varepsilon \in (0,1)$ and $K \in \mathbb{R}_+$ we have
\begin{equation}\label{Z_deviation_prob_bound}
  \max_{1 \leqslant i \leqslant \lfloor (1-\varepsilon)n \rfloor} \; \max_{j \in [n]} \; \mathbb{P}\left( |Z_{i,j}^n| \geqslant (K+1)\cdot L(\varepsilon) \cdot e^{L(\varepsilon)}+K \right) \leqslant \frac{1}{K^2},
\end{equation}
\end{proposition}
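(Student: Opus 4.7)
The plan is to use the Markov property \eqref{cup_expectation} to decompose $V_{\sigma_n}(\cdot/n, j/n)$ as a compensated sum, to compare its predictable drift with the ODE \eqref{Vdef} satisfied by $V_{\gt}$ via Claim \ref{claim_int_approx}, and to close the estimate using a discrete Gronwall inequality together with Doob's $L^2$ maximal inequality.

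For each fixed $j \in [n]$, I would introduce the $(\mathcal{F}^n_i)_{i}$-martingale
\[
M^n_{i,j} := V_{\sigma_n}\!\Big(\frac{i}{n},\frac{j}{n}\Big) - \frac{j}{n} - \sum_{k=1}^{i} \frac{1}{n}\cdot \frac{\gt(V_{\sigma_n}((k-1)/n,\,j/n))}{\gt((k-1)/n)},
\]
and its rescaling $N^n_{i,j} := \sqrt{n}\, M^n_{i,j}$. Since the increments of $V_{\sigma_n}(\cdot, j/n)$ take values in $\{0, 1/n\}$ by \eqref{cup_empirical_cdf}, we have $|\Delta M^n_{i,j}|\leqslant 1/n$, hence the martingale increments of $N^n_{\cdot,j}$ are orthogonal and satisfy $\mathbb{E}[(N^n_{i,j})^2] \leqslant i/n \leqslant 1-\varepsilon < 1$ whenever $i \leqslant (1-\varepsilon)n$.

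On the deterministic side, Claim \ref{claim_int_approx} yields a one-step expansion of $V_{\gt}$ that coincides with the compensator of $M^n_{i,j}$ (with $V_{\gt}$ in place of $V_{\sigma_n}$) up to an additive error bounded in absolute value by $L(\varepsilon)/(2n^2)$. Subtracting this expansion from the analogous identity for $V_{\sigma_n}$, multiplying by $\sqrt n$, and using the Lipschitz bound $|\gt(a)-\gt(b)|/\gt((i-1)/n) \leqslant (L(\varepsilon)/2)\,|a-b|$ (which follows from $\gt((i-1)/n) \geqslant \gt(1-\varepsilon)$ combined with $\gt(1-\varepsilon) \leqslant \gt(0)=1$), I obtain the recursion
\[
\big|Z^n_{i,j} - Z^n_{i-1,j} - \Delta N^n_{i,j}\big| \;\leqslant\; \frac{L(\varepsilon)}{2n}\,|Z^n_{i-1,j}| \,+\, \frac{L(\varepsilon)}{2n^{3/2}}.
\]
Setting $Q_{i,j} := Z^n_{i,j} - N^n_{i,j}$ (so that $Q_{0,j}=Z^n_{0,j}=N^n_{0,j}=0$) and bounding $|Z^n_{i-1,j}| \leqslant |Q_{i-1,j}| + m_{i-1,j}$, where $m_{i,j} := \max_{0\leqslant k\leqslant i} |N^n_{k,j}|$, a discrete Gronwall argument using $(1+L(\varepsilon)/(2n))^i \leqslant e^{L(\varepsilon)/2}$ for $i\leqslant n$ together with the monotonicity of $m_{\cdot,j}$ yields
\[
|Z^n_{i,j}| \;\leqslant\; \big(1 + L(\varepsilon)\, e^{L(\varepsilon)}\big)\, m_{i,j} \,+\, L(\varepsilon)\, e^{L(\varepsilon)}
\]
for every $n \geqslant 1$ and $i\leqslant \lfloor(1-\varepsilon)n\rfloor$.

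Consequently the event $\{|Z^n_{i,j}|\geqslant (K+1)L(\varepsilon) e^{L(\varepsilon)} + K\}$ is contained in $\{m_{i,j} \geqslant K\}$, and Doob's $L^2$ maximal inequality gives
\[
\mathbb{P}\big(|Z^n_{i,j}|\geqslant (K+1)L(\varepsilon) e^{L(\varepsilon)} + K\big) \;\leqslant\; \mathbb{P}(m_{i,j} \geqslant K) \;\leqslant\; \frac{\mathbb{E}[(N^n_{i,j})^2]}{K^2} \;\leqslant\; \frac{1}{K^2},
\]
uniformly in $j\in[n]$ and $i\leqslant\lfloor(1-\varepsilon)n\rfloor$, as desired. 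The main technical obstacle is constant-chasing: one must apply the \emph{weaker} Lipschitz constant $L(\varepsilon)/2$ (rather than $L(\varepsilon)$ itself) so that the Gronwall factor $(1+L(\varepsilon)/(2n))^i$ fits under $e^{L(\varepsilon)/2}\leqslant e^{L(\varepsilon)}$, and one must verify that the residual $L(\varepsilon)/(2n^{3/2})$ term, when summed through Gronwall, is absorbed into the single summand $L(\varepsilon)\, e^{L(\varepsilon)}$ for every $n\geqslant 1$, not only asymptotically.
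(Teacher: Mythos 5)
Your proposal is correct and follows essentially the same strategy as the paper: a Doob-type decomposition of $Z^n_{\cdot,j}$ into a martingale part controlled by Doob's $L^2$ maximal inequality and a predictable part controlled by a discrete Gronwall estimate fed by Claims \ref{claim_Lipschitz} and \ref{claim_int_approx}. The only differences are cosmetic: you phrase the Gronwall conclusion as a pathwise bound in terms of the running maximum $m_{i,j}$ rather than restricting to the good event $B_{n,K}$, and you track the tighter Lipschitz constant $L(\varepsilon)/2$ where the paper is content with $L(\varepsilon)$ — neither changes the substance of the argument or the final constants in \eqref{Z_deviation_prob_bound}.
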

\begin{proof} Let us fix  $j \in [n]$ for the rest of this proof. Note that $V_{\sigma_n}\big(\frac{0}{n}, \frac{j}{n}\big) = V_{\gt}\big(\frac{0}{n}, \frac{j}{n}\big)=\frac{j}{n}$, thus
$Z_{0,j}^n = 0$.
   Due to the Doob decomposition theorem, we can write $Z_{i,j}^n = M_i^n + A_i^n$, where $(M_i^n)_{i=0}^n$ is a martingale satisfying $M_0^{n} = 0$ and $(A_i^n)_{i=1}^{n}$ is a predictable process with respect to the filtration $(\mathcal{F}_i^n)_{i=0}^{n}$ introduced in Definition \ref{filtration}. In particular, we have
    \begin{align}
    M_0^n=0, \quad
        M_i^n - M_{i-1}^{n} &=  (Z_{i,j}^n - Z_{i-1,j}^{n}) - \mathbb{E}\big(Z_{i,j}^n - Z_{i-1,j}^n \:\big|\: \mathcal{F}_{i-1}^{n}\big), &  i \in [n], \\
        \label{predictable_increment}
        A_0^n=Z^n_{0,j}=0, \quad
        A_i^{n} -A_{i-1}^{n} &=   \mathbb{E}\big(Z_{i,j}^n - Z_{i-1,j}^{n}\: \big| \mathcal{F}_{i-1}^{n}\big),  & i \in [n].
    \end{align}
     First we bound the martingale part. Since $\mathbb{E}\big(M_{\lfloor (1-\varepsilon)\cdot n \rfloor}^n\big)=\mathbb{E}(M^n_0)=0$, we have
    \begin{align*}
        \mathbb{E}\big((M_{\lfloor (1-\varepsilon)\cdot n \rfloor}^n)^2\big)
        = \mathrm{Var}\big(M^n_{\lfloor (1-\varepsilon)n \rfloor}\big)
        = \sum_{i=1}^{\lfloor (1-\varepsilon)\cdot n \rfloor} \mathrm{Var} (M_i^n - M_{i-1}^{n}) \stackrel{(*)}{\leqslant} \lfloor (1-\varepsilon)\cdot n \rfloor \cdot \frac{1}{n} \leqslant 1,
    \end{align*}
    where in $(*)$ we used
$\mathrm{Var} (M_i^n - M_{i-1}^{n} \, | \, \mathcal{F}_{i-1}^{n} )=n \cdot \mathrm{Var} \left( V_{\sigma_n}\big(\frac{i}{n}, \frac{j}{n}\big)- V_{\sigma_n}\big(\frac{i-1}{n}, \frac{j}{n}\big) \, | \, \mathcal{F}_{i-1}^{n} \right) \leqslant \frac{n}{n^2}$ as well as the identity
$\mathrm{Var} (M_i^n - M_{i-1}^{n})=\mathbb{E}\left( \mathrm{Var} (M_i^n - M_{i-1}^{n} \, | \, \mathcal{F}_{i-1}^{n} ) \right) $, which follows from the law of total variance and the martingale property.

    Given some $K \in \mathbb{R}_+$ let us introduce the good event
\begin{equation}
    B_{n,K} := \left\{ \max_{1 \leqslant i \leqslant \lfloor (1-\varepsilon)n\rfloor} |M_i^n| < K \right\}.
\end{equation}
 By Doob's inequality we obtain the bound $
       \mathbb{P}(B_{n,K}^c) \leqslant \frac{\mathbb{E}((M^{n}_{\lfloor (1-\varepsilon)\cdot n \rfloor})^2)}{K^2} \leqslant \frac{1}{K^2}$.

     Now let us bound the predictable process $(A_i^n)$. We have
    \begin{multline}\label{A_increment_formula}
       A_i^n - A_{i-1}^n
       \stackrel{\eqref{predictable_increment}}{=}  \mathbb{E}(Z_{i,j}^n - Z_{i-1,j}^n \: \big| \: \mathcal{F}_{i-1}^{n})
        \stackrel{\eqref{def_eq_Z}}{=} \\ \sqrt{n}\cdot \mathbb{E}\Big(V_{\sigma_n}\Big(\frac{i}{n}, \frac{j}{n}\Big) - V_{\sigma_n}\Big(\frac{i-1}{n}, \frac{j}{n}\Big) \: \big| \: \mathcal{F}_{i-1}^n \Big) - \sqrt{n} \cdot \Big(V_{\gt}\Big(\frac{i}{n}, \frac{j}{n}\Big) - V_{\gt}\Big(\frac{i-1}{n}, \frac{j}{n}\Big)\Big)\\
    \stackrel{\eqref{cup_expectation}, \eqref{Vdef}}{=}
         \sqrt{n} \cdot \Big( \frac{1}{n} \cdot \frac{\gt(V_{\sigma_n}(\frac{i-1}{n}, \frac{j}{n}))}{\gt(\frac{i-1}{n})} - \int_{\frac{i-1}{n}}^{\frac{i}{n}} \frac{\gt(V_{\gt}(x,\frac{j}{n}))}{\gt(x)} \mathrm{dx}\Big),
\end{multline}
thus for any $i$ satisfying  $1 \leqslant i \leqslant \lfloor (1-\varepsilon)\cdot n \rfloor$ we can bound
\begin{multline}\label{A_diff_boundd}
| A_i^n - A_{i-1}^n| \stackrel{\eqref{A_increment_formula}, \eqref{eq_L_eps_bound}}{\leqslant}
                   \frac{1}{\sqrt{n}} \cdot \Big| \frac{\gt(V_{\sigma_n}(\frac{i-1}{n}, \frac{j}{n}))}{\gt(\frac{i-1}{n})} - \frac{\gt(V_{\gt}(\frac{i-1}{n}, \frac{j}{n}))}{\gt(\frac{i-1}{n})}\Big| +  \frac{L(\varepsilon)}{2 n^{3/2} } \stackrel{(*)}{\leqslant} \\
 \frac{1}{\sqrt{n}} \cdot \frac{ \Vert \gt' \Vert_{\infty} \cdot \left| V_{\sigma_n}(\frac{i-1}{n}, \frac{j}{n}) - V_{\gt}(\frac{i-1}{n}, \frac{j}{n})\right|}{\gt(1-\varepsilon)} + \frac{L(\varepsilon)}{2 n^{3/2} }  \stackrel{\eqref{def_eq_Z}}{=}
\frac{1}{n} \cdot \frac{\Vert \gt' \Vert_{\infty}}{\gt(1-\varepsilon)} \cdot |Z_{i-1,j}^n| +  \frac{L(\varepsilon)}{2 n^{3/2} } \\ \leqslant
 \frac{L(\varepsilon)}{n}  \cdot |Z_{i-1,j}^n| + \frac{L(\varepsilon)}{2 n^{3/2} }=
  \frac{ L(\varepsilon)}{n}  \cdot |M_{i-1}^n+A_{i-1}^n| + \frac{L(\varepsilon)}{2 n^{3/2} }.
\end{multline}
where in $(*)$ we used that $0 \leqslant \frac{i-1}{n} \leqslant 1-\varepsilon$ and that $\gt$ is decreasing.

Recalling that $A_0^n=0$,  for any
$1 \leqslant l \leqslant \lfloor (1-\varepsilon)\cdot n \rfloor$ we obtain
\begin{equation}
   |A_l^n| \leqslant  \sum_{i=1}^{l} | A_i^n - A_{i-1}^n| \stackrel{\eqref{A_diff_boundd}}{\leqslant}
      \frac{L(\varepsilon)}{2\sqrt{n}} + \sum_{i=1}^{l} \frac{L(\varepsilon)}{n} \cdot |M_{i-1}^n+A_{i-1}^n|.
\end{equation}
Thus, if the event $B_{n,K}$ occurs then $ |A_l^n|
\leqslant (K+1) \cdot L(\varepsilon) + \frac{L(\varepsilon)}{n} \sum_{i=1}^l |A_{i-1}^n|$ holds for any $1 \leqslant l \leqslant \lfloor (1-\varepsilon)\cdot n \rfloor$.
By induction on $l$ one obtains that if $B_{n,K}$ occurs then
for all $1 \leqslant l \leqslant \lfloor (1-\varepsilon)\cdot n \rfloor$ we have
$  |A_l^n| \leqslant (K+1) \cdot L(\varepsilon) \cdot \Big(1+\frac{L(\varepsilon)}{n}\Big)^{l-1}$.
 Since $Z_{i,j}^n = M_i^n + A_i^n$, this bound on $ |A_l^n|$ and
 $ \mathbb{P}(B_{n,K}^c) \leqslant \frac{1}{K^2}$  together imply
 $ \mathbb{P}\left(\max_{1 \leqslant i \leqslant \lfloor (1-\varepsilon)n \rfloor} |Z_i^n| \geqslant (K+1)\cdot L(\varepsilon) \cdot e^{L(\varepsilon)}+K \right) \leqslant \frac{1}{K^2} $,
 from which
 the desired \eqref{Z_deviation_prob_bound} follows.
\end{proof}

\begin{proof}[Proof of Theorem \ref{permuton_convergence_thm}]
First note that \eqref{cdf_convergence_eq} trivially holds if $x=1$, since
$F_{\sigma_n}(1,y)=y=  F_{\gt}(1,y)$ holds for any $n \in \mathbb{N}$ and any $y \in [0,1]$. Similarly, \eqref{cdf_convergence_eq} trivially holds if $x=0$, $y=0$ or $y=1$. Let us thus fix $x,y \in (0,1)$.
Let us fix $\varepsilon \in (0,1)$ such that $x \leqslant 1-\varepsilon$.
Let us define $i_n:= \lceil n \cdot x \rceil $ and $j_n:=\lceil n \cdot y \rceil$.
Noting that $|F_\mu(x,y)-F_\mu(\frac{i_n}{n}, \frac{j_n}{n})| \leqslant \frac{2}{n}$ holds for the joint c.d.f.\ $F_\mu$ of any permuton (since the marginals have uniform distribution) and recalling the identity \eqref{def_eq_cup_cdf} we see that it
is enough to show
 $|V_{\sigma_n}(\frac{i_n}{n}, \frac{j_n}{n}) - V_{\gt}(\frac{i_n}{n}, \frac{j_n}{n})|\stackrel{\mathbb{P}}{\longrightarrow}0$ as  $n \to \infty$ in order to prove
    \eqref{cdf_convergence_eq}. This convergence result follows from Definition \ref{def_Z} and  Proposition \ref{prop_tight}.
\end{proof}


\section{Band structure of the limiting permuton}
In this section we prove Theorem \ref{logistic_limit_thm}. Throughout this section we assume that the functions $\gt_{\kk}, \kk \geq 1$
satisfy Assumption \ref{assumption_kk}.
Let us denote by $h_{\kk}$ the joint p.d.f.\ of $(\mathcal{U}_\kk, \mathcal{V}_\kk)$ (cf.\ \eqref{def_eq_UkVk}). By Scheff\'e's lemma it is enough to prove the pointwise convergence
\begin{equation}\label{limit_thm_pdf}
\lim_{\kk \to \infty} h_{\kk}(s,t)=  \frac{2 \cdot \psi(s) }{(e^{\psi(s)\cdot t}+e^{-\psi(s)\cdot t})^2}, \quad s \in (0,1), \; t \in \mathbb{R}
\end{equation}
in order to conclude \eqref{eq_limit_thm_logistic}, since
the r.h.s.\ of \eqref{limit_thm_pdf} is the joint p.d.f.\ of $\big(U, \frac{1}{\psi(U)}\cdot L\big)$.

We have
    $h_{\kk}(s,t) := \frac{2}{\gamma(\kk)} \cdot f_{\gt_{\kk}}\left(s+\frac{t}{\gamma(\kk)}, s - \frac{t}{\gamma(\kk)}\right) $
    for any $s \in (0,1)$ and $t \in \mathbb{R}$, where $f_{\gt_{\kk}}$ is defined in \eqref{joint_pdf_formula}.
Thus we obtain
    \begin{align}\label{u_density}
        h_\kk(s,t) \stackrel{ \eqref{joint_pdf_formula} }{=} -\frac{1}{\gamma(\kk)} \cdot \frac{((\gt_\kk)^2)'\left( V_\kk( s+\frac{t}{\gamma(\kk)}, s-\frac{t}{\gamma(\kk)} ) \right)}{\gt_\kk(s+\frac{t}{\gamma(\kk)})\cdot \gt_\kk(s-\frac{t}{\gamma(\kk)})},
    \end{align}
where $V_\kk(x,y):=u_\kk^{-1}(u_\kk(x) + u_\kk(y))$ and  $u_\kk(x) = \int_0^x \frac{1}{\gt_\kk(z)}\, \mathrm{d}z$, cf.\
\eqref{V_in_terms_of_u}.

Note that from \eqref{def_psi_k} and $\gt_\kk(0)=1$ we obtain
\begin{equation}\label{gt_from_psi_k}
\gt_\kk(s) = \exp\left(-\gamma(\kk)\int_0^s \psi_\kk(u) \mathrm{du}\right), \qquad s \in [0,1).
\end{equation}
It follows from  Assumption \ref{assumption_kk}
that for all $\varepsilon \in (0,1)$ we have
\begin{equation}\label{min_kk_eps}
   \lim_{\kk \to \infty}  m(\kk, \varepsilon)=m(\varepsilon)>0, \; \text{where}\; m(\kk, \varepsilon):=  \min_{0 \leqslant x \leqslant 1-\varepsilon} \psi_\kk(x), \quad  m(\varepsilon):=\min_{0 \leqslant x \leqslant 1-\varepsilon} \psi(x).
\end{equation}

\begin{lemma}[Estimating $\gt_\kk$] For any $\varepsilon \in (0,1)$ and $T \in \mathbb{R}_+$ we have
    \begin{align}\label{psi_exp}
   \lim_{\kk \to \infty}
   \delta(\kk, \varepsilon,T)=0, \quad \text{where} \quad
  \delta(\kk, \varepsilon,T):=    \sup_{x \in [\varepsilon, 1-\varepsilon]} \sup_{y \in [-T,T]}
\left|
\frac{\gt_\kk(x) e^{-\psi(x)\cdot y}}{\gt_\kk\big(x+\frac{y}{\gamma(\kk)}\big)}
 - 1 \right| =0.
    \end{align}
\end{lemma}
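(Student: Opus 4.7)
The plan is to rewrite the ratio using \eqref{gt_from_psi_k} and then reduce everything to showing that a certain integrand converges uniformly to a constant. From \eqref{gt_from_psi_k} we have
\begin{equation*}
\frac{\gt_\kk(x)}{\gt_\kk\bigl(x+\tfrac{y}{\gamma(\kk)}\bigr)}
= \exp\!\left( \gamma(\kk)\int_{x}^{x+y/\gamma(\kk)} \psi_\kk(u)\,\mathrm{d}u \right),
\end{equation*}
so after multiplying by $e^{-\psi(x)y}$ the quantity whose absolute value appears in \eqref{psi_exp} equals $e^{\Delta_\kk(x,y)}-1$, where
\begin{equation*}
\Delta_\kk(x,y) := \gamma(\kk)\int_{x}^{x+y/\gamma(\kk)} \psi_\kk(u)\,\mathrm{d}u - \psi(x)\,y.
\end{equation*}
Since $t \mapsto e^t - 1$ is continuous at $0$, it suffices to show that $\Delta_\kk(x,y) \to 0$ uniformly for $(x,y) \in [\varepsilon,1-\varepsilon]\times[-T,T]$.

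The key step is the change of variables $u = x + v/\gamma(\kk)$, which gives
\begin{equation*}
\Delta_\kk(x,y) \;=\; \int_0^y \Bigl[\psi_\kk\!\bigl(x+\tfrac{v}{\gamma(\kk)}\bigr)-\psi(x)\Bigr]\,\mathrm{d}v.
\end{equation*}
By Assumption \ref{assumption_kk}\eqref{ass_gammak_to_infty} we have $T/\gamma(\kk)<\varepsilon/2$ for all sufficiently large $\kk$, so the argument $x + v/\gamma(\kk)$ stays in the compact interval $J := [\varepsilon/2, 1-\varepsilon/2] \subset [0,1)$ whenever $x \in [\varepsilon,1-\varepsilon]$ and $v \in [-T,T]$. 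I will bound the integrand by the triangle inequality:
\begin{equation*}
\bigl|\psi_\kk\!\bigl(x+\tfrac{v}{\gamma(\kk)}\bigr)-\psi(x)\bigr|
\;\leqslant\; \bigl|\psi_\kk\!\bigl(x+\tfrac{v}{\gamma(\kk)}\bigr)-\psi\!\bigl(x+\tfrac{v}{\gamma(\kk)}\bigr)\bigr|
+ \bigl|\psi\!\bigl(x+\tfrac{v}{\gamma(\kk)}\bigr)-\psi(x)\bigr|.
\end{equation*}
The first term is at most $\sup_{u \in J}|\psi_\kk(u)-\psi(u)|$, which tends to $0$ by Assumption \ref{assumption_kk}\eqref{ass_gammak_unif_conv}. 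For the second term I use that $\psi$, being the uniform limit on $J$ of the equicontinuous family $\psi_\kk$, is continuous on $[0,1)$ (as remarked after Assumption \ref{assumption_kk}) and hence uniformly continuous on $J$; combined with $\sup_{v\in[-T,T]} |v/\gamma(\kk)| = T/\gamma(\kk) \to 0$, this term also tends to $0$ uniformly in $(x,v)$.

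Putting the two pieces together, the integrand converges to $0$ uniformly in $(x,v) \in [\varepsilon,1-\varepsilon]\times[-T,T]$, so $|\Delta_\kk(x,y)| \leqslant T \cdot o(1)$ uniformly in $(x,y)\in[\varepsilon,1-\varepsilon]\times[-T,T]$. Since $e^{\Delta_\kk(x,y)}-1$ is a continuous function of $\Delta_\kk(x,y)$ vanishing at $0$, we conclude $\delta(\kk,\varepsilon,T) \to 0$ as $\kk\to\infty$. The argument is essentially a soft uniform-continuity/uniform-convergence exercise; the only place where one must be a little careful is verifying that the argument $x+v/\gamma(\kk)$ stays inside a compact subinterval of $[0,1)$ where Assumption \ref{assumption_kk} gives us the uniform control on $\psi_\kk-\psi$, which is handled by enlarging $\varepsilon$ to $\varepsilon/2$ once $\gamma(\kk)$ is large enough.
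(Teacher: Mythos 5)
Your proof is correct and follows essentially the same route as the paper: rewrite the ratio via \eqref{gt_from_psi_k}, reduce to the uniform smallness of $\gamma(\kk)\int_x^{x+y/\gamma(\kk)}(\psi_\kk(u)-\psi(x))\,\mathrm{d}u$, and control that by a supremum over a shrinking window. The only (cosmetic) divergence is in the final triangle inequality: you split through $\psi(x+v/\gamma(\kk))$ and invoke uniform convergence plus uniform continuity of the limit $\psi$, whereas the paper cites parts \eqref{ass_psik_unif_equico} and \eqref{ass_gammak_unif_conv} directly, implicitly splitting through $\psi_\kk(x)$ and using equicontinuity of the family $\psi_\kk$; both are equivalent since the paper already notes that continuity of $\psi$ follows from those two assumptions.
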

\begin{proof} Let us fix $\varepsilon$ and $T$.
If $\kk$ is big enough then
$x \in [\varepsilon, 1-\varepsilon]$, $y \in [-T,T]$ and Assumption \ref{assumption_kk}\eqref{ass_gammak_to_infty}
together imply $x+\frac{y}{\gamma(\kk)} \in [0, 1-\frac{\varepsilon}{2}]$. Let us assume that this holds. We have
    \begin{align*}
        \frac{\gt_\kk(x) e^{-\psi(x)\cdot y}}{\gt_\kk\big(x+\frac{y}{\gamma(\kk)}\big)}   \stackrel{\eqref{gt_from_psi_k}}{=} \exp\Bigg(\gamma(\kk) \int_x^{x+\frac{y}{\gamma(\kk)}}\big(\psi_\kk(u) - \psi(x)\big)\mathrm{d}u\Bigg).
    \end{align*}
    In order to prove \eqref{psi_exp}, we observe that
\begin{multline}\label{sup_sup_int_bound}
\sup_{x \in [\varepsilon, 1-\varepsilon]} \sup_{y \in [-T,T]}  \left|  \gamma(\kk) \int_x^{x+\frac{y}{\gamma(\kk)}}\big(\psi_\kk(u) - \psi(x)\big)\mathrm{d}u \right| \leqslant \\ T \cdot \sup_{x \in [\varepsilon, 1-\varepsilon]} \sup_{ \frac{-T}{\gamma(\kk)}\leqslant z \leqslant \frac{T}{\gamma(\kk)}}|\psi_\kk(x+z) - \psi(x)|
\end{multline}
holds, and  it follows from parts \eqref{ass_psik_unif_equico} and \eqref{ass_gammak_unif_conv} of Assumption \ref{assumption_kk} that the r.h.s.\ of \eqref{sup_sup_int_bound} goes to zero as $\kk \to \infty$. The proof of \eqref{psi_exp} is complete.
\end{proof}

\begin{lemma}[Estimating $u_\kk$]\label{lemma_u_k_x}
For any $\varepsilon \in (0,1)$ we have
    \begin{align}\label{eq_u_kk_x}
\limsup_{\kk \to \infty} \sup_{x \in [\varepsilon, 1-\varepsilon]}
\left| u_\kk(x) \cdot \gamma(\kk) \cdot \gt_{\kk}(x) - \frac{1}{\psi(x)}   \right|=0.
    \end{align}
    \end{lemma}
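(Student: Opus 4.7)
The starting point is the change of variables $z = x - w/\gamma(\kk)$, which recasts $u_\kk(x) = \int_0^x \gt_\kk(z)^{-1}\, \mathrm{d}z$ as
\begin{equation*}
u_\kk(x)\cdot \gamma(\kk) \cdot \gt_\kk(x) \;=\; \int_0^{\gamma(\kk) x} \frac{\gt_\kk(x)}{\gt_\kk\!\left(x - \tfrac{w}{\gamma(\kk)}\right)}\, \mathrm{d}w.
\end{equation*}
Since $1/\psi(x) = \int_0^\infty e^{-\psi(x) w}\, \mathrm{d}w$, and \eqref{psi_exp} (used with $y=-w$) says the integrand is asymptotically $e^{-\psi(x) w}$, the plan is a split-and-dominate argument at a cutoff $T > 0$, uniform in $x \in [\varepsilon, 1-\varepsilon]$.

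For the head $w \in [0, T]$, applying \eqref{psi_exp} with $y = -w$ gives that $\frac{\gt_\kk(x)}{\gt_\kk(x - w/\gamma(\kk))}$ differs from $e^{-\psi(x) w}$ by at most $\delta(\kk, \varepsilon, T) \cdot e^{-\psi(x) w} \leqslant \delta(\kk, \varepsilon, T)$ (using $\psi > 0$ and $w \geqslant 0$), so the head integral converges to $(1 - e^{-\psi(x) T})/\psi(x)$ uniformly in $x \in [\varepsilon, 1-\varepsilon]$, with absolute error at most $T \cdot \delta(\kk, \varepsilon, T) \to 0$.

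For the tail $w \in [T, \gamma(\kk) x]$, I would use \eqref{gt_from_psi_k} together with the lower bound $\psi_\kk(u) \geqslant m(\kk, \varepsilon)$ on $[0,1-\varepsilon]$ to dominate
\begin{equation*}
\frac{\gt_\kk(x)}{\gt_\kk\!\left(x - \tfrac{w}{\gamma(\kk)}\right)} \;=\; \exp\!\left(-\gamma(\kk)\!\int_{x - w/\gamma(\kk)}^{x} \psi_\kk(u)\, \mathrm{d}u\right) \;\leqslant\; e^{-m(\kk, \varepsilon) \cdot w},
\end{equation*}
so the tail is bounded by $e^{-m(\kk, \varepsilon) T}/m(\kk, \varepsilon)$, which tends to $e^{-m(\varepsilon) T}/m(\varepsilon)$ by \eqref{min_kk_eps}. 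The corresponding limit tail $\int_T^\infty e^{-\psi(x) w}\, \mathrm{d}w \leqslant e^{-m(\varepsilon) T}/m(\varepsilon)$ is also uniformly small in $x \in [\varepsilon, 1-\varepsilon]$.

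Combining: given $\eta > 0$, I first pick $T$ so large that both tail contributions are $< \eta/3$ (for all sufficiently large $\kk$), then pick $\kk_0$ so large that for $\kk \geqslant \kk_0$ the head contribution is within $\eta/3$ of its limit $(1-e^{-\psi(x)T})/\psi(x)$, which is itself within $\eta/3$ of $1/\psi(x)$ by the tail bound. This yields $|u_\kk(x) \gamma(\kk) \gt_\kk(x) - 1/\psi(x)| < \eta$ uniformly on $[\varepsilon, 1-\varepsilon]$, proving \eqref{eq_u_kk_x}. The main obstacle is uniformizing the tail estimate in $\kk$: this is precisely where the uniform positivity of $\psi_\kk$ on $[0,1-\varepsilon]$ --- a consequence of Assumption \ref{assumption_kk}\eqref{ass_psi_pos} combined with the uniform convergence \eqref{ass_gammak_unif_conv} --- enters through \eqref{min_kk_eps}.
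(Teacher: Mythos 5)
Your proposal is correct and follows essentially the same route as the paper's proof: the same change of variables $z=x-w/\gamma(\kk)$, the same split of the integral at a cutoff $T$, the same use of \eqref{psi_exp} on the head, the same exponential domination via \eqref{gt_from_psi_k} and \eqref{min_kk_eps} on the tail, and the same ``let $T\to\infty$ after $\kk\to\infty$'' conclusion (the paper phrases it as a $\limsup$ bound of $2e^{-m(\varepsilon)T}/m(\varepsilon)$ rather than an $\eta/3$ argument, but this is cosmetic).
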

    \begin{proof} Let us fix $\varepsilon \in (0,1)$. For any  $T \in \mathbb{R}_+$ and for large enough $\kk$ we have
\begin{multline}\label{u_split_T_ints}
     u_\kk(x) \cdot \gamma(\kk) \cdot \gt_{\kk}(x)\stackrel{\eqref{V_in_terms_of_u} }{=} \gamma(\kk) \int_0^x \frac{\gt_\kk(x)}{\gt_\kk(z)} \, \mathrm{d}z=
     \int_0^{\gamma(\kk)x} \frac{\gt_\kk(x)}{\gt_\kk(x-\frac{y}{\gamma(\kk)})}\, \mathrm{d}y
     \stackrel{ \eqref{gt_from_psi_k}}{=}
     \\ \int_0^T \frac{\gt_\kk(x)}{\gt_\kk(x-\frac{y}{\gamma(\kk)})}\, \mathrm{d}y + \int_T^{\gamma(\kk)x} \exp\left( -\gamma(\kk)\int_{x-\frac{y}{\gamma(\kk)}}^x \psi_\kk(u) \mathrm{du} \right)\, \mathrm{d}y.
\end{multline}
We have $\left|\int_0^T \frac{\gt_\kk(x)}{\gt_\kk(x-\frac{y}{\gamma(\kk)})}\, \mathrm{d}y -\int_0^T e^{-\psi(x) \cdot y} \, \mathrm{d}y \right| \leqslant T \cdot \delta(\kk,\varepsilon,T)$ (cf.\ \eqref{psi_exp}), while the second summand on the r.h.s.\ of
\eqref{u_split_T_ints} is  bounded by $\int_T^{\gamma(\kk)x} e^{ -m(\kk,\varepsilon) \cdot y }\, \mathrm{d}y
\leqslant \frac{e^{-m(\kk,\varepsilon)\cdot T}}{m(\kk, \varepsilon)}$ (cf.\ \eqref{min_kk_eps}). Noting that $\left| \int_0^T e^{-\psi(x) \cdot y}\, \mathrm{d}y -\frac{1}{\psi(x)} \right| =
\frac{e^{-\psi(x) \cdot T}}{\psi(x)} \leqslant \frac{e^{-m(\varepsilon) \cdot T}}{m(\varepsilon)}$, we obtain from
  \eqref{min_kk_eps} and \eqref{psi_exp} that the $\limsup$ that appears on the l.h.s.\ of \eqref{eq_u_kk_x} is at most $2 \cdot \frac{e^{-m(\varepsilon) \cdot T}}{m(\varepsilon)}$. Since the choice of $T$ was arbitrary and $m(\varepsilon)>0$, the proof of Lemma \ref{lemma_u_k_x} is complete.
    \end{proof}


\begin{corollary}\label{cor_uu_frac_lim}
    For any $s \in (0,1)$ and $y \in \mathbb{R}$ we have
    \begin{multline}\label{u_fraction_lim}
        \lim_{\kk \to \infty} \frac{u_{\kk}\left( s+ \frac{y}{\gamma(\kk)} \right) }{u_{\kk}(s)} = \lim_{\kk \to \infty} \frac{u_{\kk}\left( s+ \frac{y}{\gamma(\kk)} \right) }{u_{\kk}(s)} \cdot \frac{1}{1}
        \stackrel{\eqref{eq_u_kk_x}}{=} \\
\lim_{\kk \to \infty}
\frac{u_{\kk}\left( s+ \frac{y}{\gamma(\kk)} \right)}{u_\kk(s)} \cdot
\frac{
u_\kk(s) \cdot \gamma(\kk) \cdot \gt_{\kk}(s) \cdot \psi(s)  }{
 u_\kk\left(s+ \frac{y}{\gamma(\kk)}\right) \cdot \gamma(\kk) \cdot \gt_{\kk}\left(s+ \frac{y}{\gamma(\kk)}\right) \cdot \psi\left(s+ \frac{y}{\gamma(\kk)}\right) }
        \\ =
        \lim_{\kk \to \infty} \frac{\gt_\kk(s)}{\gt_\kk\left( s+ \frac{y}{\gamma(\kk)} \right)} \cdot
 \frac       {  \psi(s)  }{  \psi\left( s+ \frac{y}{\gamma(\kk)} \right)}
        \stackrel{\eqref{psi_exp}}{=} e^{\psi(s)\cdot y}.
    \end{multline}
\end{corollary}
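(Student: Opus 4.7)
The plan is to verify the chain of equalities that forms \eqref{u_fraction_lim}. The main trick is to multiply the ratio $u_\kk(s+y/\gamma(\kk))/u_\kk(s)$ by a cleverly chosen factor equal to $1$, namely the quotient
\[
\frac{u_\kk(s)\,\gamma(\kk)\,\gt_\kk(s)\,\psi(s)}{u_\kk\!\bigl(s+\tfrac{y}{\gamma(\kk)}\bigr)\,\gamma(\kk)\,\gt_\kk\!\bigl(s+\tfrac{y}{\gamma(\kk)}\bigr)\,\psi\!\bigl(s+\tfrac{y}{\gamma(\kk)}\bigr)}.
\]
After algebraic cancellation of the $u_\kk$ factors and of $\gamma(\kk)$, what remains is a product of two simple ratios, each of which can be handled by a result already proved.

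The implementation would proceed as follows. First, fix $s \in (0,1)$ and $y \in \mathbb{R}$, and choose $\varepsilon \in (0, \min\{s,\, 1-s\})$. By Assumption \ref{assumption_kk}\eqref{ass_gammak_to_infty}, $\gamma(\kk) \to +\infty$, so for all sufficiently large $\kk$ both $s$ and the shifted argument $s + y/\gamma(\kk)$ lie in $[\varepsilon, 1-\varepsilon]$. Lemma \ref{lemma_u_k_x} then applies uniformly at both points and yields
\[
u_\kk(s)\,\gamma(\kk)\,\gt_\kk(s)\,\psi(s) \longrightarrow 1, \qquad u_\kk\!\bigl(s+\tfrac{y}{\gamma(\kk)}\bigr)\,\gamma(\kk)\,\gt_\kk\!\bigl(s+\tfrac{y}{\gamma(\kk)}\bigr)\,\psi\!\bigl(s+\tfrac{y}{\gamma(\kk)}\bigr) \longrightarrow 1,
\]
where for the second limit I also use continuity of $\psi$ at $s$. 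This justifies inserting the chosen $1/1$ factor in the first equality of \eqref{u_fraction_lim}: multiplying by a ratio whose numerator and denominator separately tend to $1$ preserves the limit. After cancellation the expression simplifies to the product of $\gt_\kk(s)/\gt_\kk(s+y/\gamma(\kk))$ and $\psi(s)/\psi(s+y/\gamma(\kk))$.

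To finish, I would apply \eqref{psi_exp} with $T := |y|+1$, which gives $\gt_\kk(s)/\gt_\kk(s+y/\gamma(\kk)) \to e^{\psi(s)\cdot y}$, while continuity of $\psi$ at $s$ handles the remaining ratio $\psi(s)/\psi(s+y/\gamma(\kk)) \to 1$. The only subtlety is bookkeeping: one must track that $s + y/\gamma(\kk)$ eventually lies in the compact interval $[\varepsilon, 1-\varepsilon]$ on which Lemma \ref{lemma_u_k_x} and \eqref{psi_exp} provide uniform convergence. No serious obstacle is expected, as the corollary is essentially the asymptotic content of Lemma \ref{lemma_u_k_x} repackaged via a multiply-and-cancel trick.
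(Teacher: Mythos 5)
Your proof is correct and is essentially identical to the paper's own argument, which consists precisely of this multiply-and-cancel chain: insert the ratio that converges to $1/1$ by Lemma \ref{lemma_u_k_x} and continuity of $\psi$, cancel the $u_\kk$ and $\gamma(\kk)$ factors, and finish with \eqref{psi_exp} together with continuity of $\psi$. The only cosmetic point is that it is cleaner to read the identity in the direction $\frac{u_\kk(s+y/\gamma(\kk))}{u_\kk(s)} = \Big(\text{factor}\to 1\Big)\cdot\Big(\text{factor}\to e^{\psi(s)y}\Big)$ so one never needs to assume a priori that the left-hand limit exists, but this is exactly what your bookkeeping delivers.
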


Recall from below \eqref{u_density}
that $V_\kk(x,y):=u_\kk^{-1}(u_\kk(x) + u_\kk(y))$.
\begin{lemma}[Estimating $V_\kk$]\label{lemma_ugly_log} For any $s \in (0,1)$ and $t \in \mathbb{R}$ we have
    \begin{align}\label{ugly_logarithm}
       \lim_{\kk \to \infty} \gamma(\kk) \cdot \left(V_\kk\Big(s+\frac{t}{\gamma(\kk)}, s - \frac{t}{\gamma(\kk)}\Big) - s\right)=  \frac{\ln(e^{\psi(s)t} + e^{-\psi(s)t})}{\psi(s)}.
    \end{align}
    \begin{proof} First we show that
  $  \gamma(\kk) \cdot \left(V_\kk\Big(s+\frac{t}{\gamma(\kk)}, s - \frac{t}{\gamma(\kk)}\Big) - s\right) \leqslant  \frac{\ln(e^{\psi(s)t} + e^{-\psi(s)t})}{\psi(s)} +\delta  $ holds for any $\delta>0$ if $\kk$ is big enough. By $V_\kk(x,y)=u_\kk^{-1}(u_\kk(x) + u_\kk(y))$ and the strict monotonicity of $u_\kk(\cdot)$ we only need to check that if $\kk$ is large enough then we have
  \begin{equation}\label{uuu_delta_ineq}
  u_{\kk}\left( s + \frac{t}{\gamma(\kk)}\right) + u_{\kk}\left( s - \frac{t}{\gamma(\kk)} \right) \leqslant
  u_{\kk}\left( s + \frac{1}{\gamma(\kk)}\cdot \left(\frac{\ln(e^{\psi(s)t} + e^{-\psi(s)t})}{\psi(s)} +\delta \right) \right).
  \end{equation}
Dividing both sides of \eqref{uuu_delta_ineq} by $u_\kk(s)$, letting $\kk \to \infty$ and using Corollary \ref{cor_uu_frac_lim} three times we obtain that \eqref{uuu_delta_ineq} indeed holds is $\kk$ is big enough. The proof of the fact that for any $\delta>0$ the inequality $  \gamma(\kk) \cdot \left(V_\kk\Big(s+\frac{t}{\gamma(\kk)}, s - \frac{t}{\gamma(\kk)}\Big) - s\right) \geq  \frac{\ln(e^{\psi(s)t} + e^{-\psi(s)t})}{\psi(s)} -\delta  $ holds for big enough $\kk$ is analogous and we omit it.  Since $\delta>0$ was arbitrary, we obtain
\eqref{ugly_logarithm}.
    \end{proof}
\end{lemma}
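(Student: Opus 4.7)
The plan is to reduce the claim to an asymptotic comparison between $u_\kk$ evaluated at two nearby points, and then exploit the ratio asymptotics from Corollary \ref{cor_uu_frac_lim}. Write
\[
r := \frac{\ln(e^{\psi(s)t} + e^{-\psi(s)t})}{\psi(s)}, \qquad R_\kk := \gamma(\kk) \cdot \left(V_\kk\left(s+\tfrac{t}{\gamma(\kk)}, s - \tfrac{t}{\gamma(\kk)}\right) - s\right).
\]
For every $\delta > 0$ I would establish the two-sided sandwich $r - \delta \leqslant R_\kk \leqslant r + \delta$ for all sufficiently large $\kk$, and then send $\delta \downarrow 0$.

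For the upper bound, the defining identity $V_\kk(x,y) = u_\kk^{-1}(u_\kk(x) + u_\kk(y))$ and the strict monotonicity of $u_\kk$ convert $R_\kk \leqslant r + \delta$ into
\[
u_\kk\Big(s + \tfrac{t}{\gamma(\kk)}\Big) + u_\kk\Big(s - \tfrac{t}{\gamma(\kk)}\Big) \leqslant u_\kk\Big(s + \tfrac{r+\delta}{\gamma(\kk)}\Big).
\]
After dividing both sides by $u_\kk(s) > 0$, three applications of Corollary \ref{cor_uu_frac_lim} (with the fixed constants $y = t$, $y = -t$, and $y = r + \delta$) show that the left-hand side tends to $e^{\psi(s)t} + e^{-\psi(s)t}$, while the right-hand side tends to $e^{\psi(s)(r+\delta)} = \bigl(e^{\psi(s)t} + e^{-\psi(s)t}\bigr) \cdot e^{\psi(s)\delta}$. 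Since $\psi(s) > 0$ by Assumption \ref{assumption_kk}\eqref{ass_psi_pos} and $\delta > 0$, the limit on the right strictly exceeds the limit on the left, so the displayed inequality holds for all large $\kk$.

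The lower bound $R_\kk \geqslant r - \delta$ follows by the symmetric argument: the same monotonicity reduces it to an inequality between $u_\kk$ values whose $u_\kk(s)$-normalized limits are $e^{\psi(s)t} + e^{-\psi(s)t}$ versus $\bigl(e^{\psi(s)t} + e^{-\psi(s)t}\bigr) e^{-\psi(s)\delta}$, which is again a strict inequality. The main subtlety — and essentially the only one — is verifying that Corollary \ref{cor_uu_frac_lim} applies to the argument $s + (r \pm \delta)/\gamma(\kk)$ on the right; this is immediate because $r \pm \delta$ is a fixed real independent of $\kk$, exactly the regime covered by that corollary. Once this is noted, the squeeze is routine and \eqref{ugly_logarithm} follows by letting $\delta \downarrow 0$.
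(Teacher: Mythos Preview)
Your proposal is correct and follows essentially the same approach as the paper: a $\delta$-sandwich, the reduction via $V_\kk = u_\kk^{-1}(u_\kk(\cdot)+u_\kk(\cdot))$ and the monotonicity of $u_\kk$, then three applications of Corollary~\ref{cor_uu_frac_lim} after dividing by $u_\kk(s)$. Your version is slightly more explicit in spelling out the limiting values on each side and in noting the role of Assumption~\ref{assumption_kk}\eqref{ass_psi_pos}, but the argument is the same.
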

\begin{corollary} For any $s \in (0,1)$ and $t \in \mathbb{R}$ we have
\begin{multline}\label{gt_V_pm_rec}
    \lim_{\kk \to \infty} \frac{\gt_\kk(s)}{\gt_\kk\left( V_\kk\Big(s+\frac{t}{\gamma(\kk)}, s - \frac{t}{\gamma(\kk)}\Big) \right)} = \\
 \lim_{\kk \to \infty} \frac{\gt_\kk(s)}
 {\gt_\kk\left( s+ \frac{1}{\gamma(\kk)} \frac{\ln(e^{\psi(s)t} + e^{-\psi(s)t})}{\psi(s)} \right)} \cdot \frac{\gt_\kk\left( s+ \frac{1}{\gamma(\kk)} \frac{\ln(e^{\psi(s)t} + e^{-\psi(s)t})}{\psi(s)} \right)}
 {\gt_\kk\left( V_\kk\Big(s+\frac{t}{\gamma(\kk)}, s - \frac{t}{\gamma(\kk)}\Big) \right)}
        \stackrel{\eqref{psi_exp}, \eqref{ugly_logarithm}}{=} \\
    \lim_{\kk \to \infty}
  \frac{\gt_\kk(s)}{\gt_\kk\left( s+ \frac{1}{\gamma(\kk)} \frac{\ln(e^{\psi(s)t} + e^{-\psi(s)t})}{\psi(s)} \right)} \cdot 1 \stackrel{\eqref{psi_exp}}{=}  e^{\psi(s)t}+e^{-\psi(s)t}.
\end{multline}
\end{corollary}

\begin{proof}[Proof of Theorem \ref{logistic_limit_thm}] Let us fix $s \in (0,1)$ and $t \in \mathbb{R}$. We only need to prove \eqref{limit_thm_pdf}.
\begin{multline}
\lim_{\kk \to \infty} h_{\kk}(s,t)\stackrel{\eqref{u_density}}{=}
\lim_{\kk \to \infty}
 \frac{-1}{\gamma(\kk)} \cdot \frac{((\gt_\kk)^2)'\left( V_\kk( s+\frac{t}{\gamma(\kk)}, s-\frac{t}{\gamma(\kk)} ) \right)}{\gt_\kk(s+\frac{t}{\gamma(\kk)})\cdot \gt_\kk(s-\frac{t}{\gamma(\kk)})}
 \stackrel{\eqref{psi_exp}}{=} \\
 \lim_{\kk \to \infty}
 \frac{-1}{\gamma(\kk)} \cdot \frac{((\gt_\kk)^2)'\left( V_\kk( s+\frac{t}{\gamma(\kk)}, s-\frac{t}{\gamma(\kk)} ) \right)}{ (\gt_\kk(s))^2}\stackrel{\eqref{gt_V_pm_rec}}{=} \\ \lim_{\kk \to \infty}
 \frac{-1}{\gamma(\kk)} \cdot \frac{((\gt_\kk)^2)'\left( V_\kk( s+\frac{t}{\gamma(\kk)}, s-\frac{t}{\gamma(\kk)} ) \right)}{ \left(\gt_\kk\left( V_\kk( s+\frac{t}{\gamma(\kk)}, s-\frac{t}{\gamma(\kk)} ) \right)\right)^2 \cdot (e^{\psi(s)\cdot t}+e^{-\psi(s)\cdot t})^2} \stackrel{\eqref{def_psi_k}}{=} \\
 \lim_{\kk \to \infty} \frac{2 \cdot \psi_\kk\left(  V_\kk( s+\frac{t}{\gamma(\kk)}, s-\frac{t}{\gamma(\kk)} ) \right)}{(e^{\psi(s)\cdot t}+e^{-\psi(s)\cdot t})^2} \stackrel{(*)}{=}
 \frac{2 \cdot \psi(s) }{(e^{\psi(s)\cdot t}+e^{-\psi(s)\cdot t})^2},
\end{multline}
where $(*)$ follows from parts \eqref{ass_psik_unif_equico} and \eqref{ass_gammak_unif_conv} of Assumption \ref{assumption_kk} and Lemma \ref{lemma_ugly_log}. The proof of
\eqref{limit_thm_pdf} and thus the proof of Theorem \ref{logistic_limit_thm} is complete.
\end{proof}

\section{Symmetry}
\label{section_symm}

Recall the notation of $F_\sigma$ and $V_\sigma$ from Definition \ref{def_permutation_permuton}.
\begin{proposition}[Product form]\label{distribution_prop}
Let $\sigma_n \sim \mathrm{PERM}(\gt,n)$.
For every $\pi \in S_n$ we have
\begin{align}\label{distribution}
    \mathbb{P}(\sigma_n = \pi) =
    \prod_{i=1}^{n} \frac{\gt\left(V_{\pi}\left( \frac{i-1}{n}, \frac{\pi(i)-1}{n}  \right)  \right) - \gt\left( V_{\pi}\left( \frac{i-1}{n}, \frac{\pi(i)-1}{n}  \right) +\frac{1}{n} \right)}{\gt(\frac{i-1}{n})}.
\end{align}
\end{proposition}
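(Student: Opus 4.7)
The plan is to unfold the sequential construction in Definition \ref{def_gen_perm_model}. Since the variables $C^{(n),1},\dots,C^{(n),n}$ are independent, and the event $\{\sigma_n=\pi\}$ is equivalent to specifying, for each $i\in[n]$, exactly one value of $C^{(n),i}$ (namely the position of card $\pi(i)$ in the remaining deck $D^{(n)}_i$ at step $i$ on the event $\sigma_n=\pi$), we can write
\begin{equation*}
\mathbb{P}(\sigma_n=\pi)=\prod_{i=1}^n \mathbb{P}\bigl(C^{(n),i}=\ell_i(\pi)\bigr),
\end{equation*}
where $\ell_i(\pi)$ is the rank of $\pi(i)$ from the bottom inside $D^{(n)}_i=[n]\setminus\{\pi(1),\dots,\pi(i-1)\}$. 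So the whole proposition reduces to matching the single-step probabilities given by \eqref{nu_prob_def_eq} with the factor appearing in \eqref{distribution}.

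The key combinatorial identification is:
\begin{equation*}
\ell_i(\pi)=\bigl|\{k\in[\pi(i)]:k\in D^{(n)}_i\}\bigr|=\pi(i)-\bigl|\{t<i:\pi(t)<\pi(i)\}\bigr|.
\end{equation*}
Using \eqref{empirical_cdf} for the (deterministic) permutation $\pi$, the second term on the right is exactly $n\cdot F_{\pi}\bigl(\tfrac{i-1}{n},\tfrac{\pi(i)-1}{n}\bigr)$. Consequently
\begin{equation*}
\frac{(i-1)+(\ell_i(\pi)-1)}{n}=\frac{i-1}{n}+\frac{\pi(i)-1}{n}-F_{\pi}\Bigl(\frac{i-1}{n},\frac{\pi(i)-1}{n}\Bigr)\stackrel{\eqref{cup_empirical_cdf}}{=}V_{\pi}\Bigl(\frac{i-1}{n},\frac{\pi(i)-1}{n}\Bigr),
\end{equation*}
and similarly $\frac{(i-1)+\ell_i(\pi)}{n}=V_{\pi}\bigl(\frac{i-1}{n},\frac{\pi(i)-1}{n}\bigr)+\frac{1}{n}$.

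Substituting these two identifications into \eqref{nu_prob_def_eq} gives the $i$-th factor of the product in \eqref{distribution}, and taking the product over $i\in[n]$ completes the proof. The only step where one has to be careful is the combinatorial identity for $\ell_i(\pi)$ and the matching of indices with the conventions in \eqref{empirical_cdf}--\eqref{cup_empirical_cdf} (in particular the off-by-one between $\pi(i)$ and $\pi(i)-1$); once that is straight, the rest is an unfolding of definitions. No additional probabilistic argument beyond the independence of the $C^{(n),i}$'s is needed.
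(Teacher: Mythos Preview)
Your proof is correct and follows essentially the same route as the paper's. The paper phrases the factorisation via the chain rule of conditional probabilities rather than directly via the independence of the $C^{(n),i}$'s, but the core step is identical: identify the rank $\ell_i(\pi)$ of $\pi(i)$ in the remaining deck as $\pi(i)-nF_\pi\bigl(\tfrac{i-1}{n},\tfrac{\pi(i)-1}{n}\bigr)$, rewrite the argument of $g$ in \eqref{nu_prob_def_eq} as $V_\pi\bigl(\tfrac{i-1}{n},\tfrac{\pi(i)-1}{n}\bigr)$ via \eqref{cup_empirical_cdf}, and take the product.
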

\begin{proof} Let us fix $\pi \in S_n$. Recalling Definition \ref{def_gen_perm_model}, for every $i \in [n]$ we have
\begin{multline}\label{condprob_pi}
    \mathbb{P} \left(\sigma_n(i) = \pi(i) \: \big| \: \sigma_n(1) = \pi(1), \ldots, \sigma_n(i-1) = \pi(i-1) \right)
    \stackrel{(*)}{=} \\
    \mathbb{P}\left(\Ct^{(n),i} = (\pi(i)-1) -n \cdot F_\pi(i-1,\pi(i)-1)+1 \right)\stackrel{\eqref{nu_prob_def_eq}, \eqref{cup_empirical_cdf}}{=} \\
    \frac{\gt\left(V_{\pi}\left( \frac{i-1}{n}, \frac{\pi(i)-1}{n}  \right)  \right) - \gt\left( V_{\pi}\left( \frac{i-1}{n}, \frac{\pi(i)-1}{n}  \right) +\frac{1}{n} \right)}{\gt(\frac{i-1}{n})},
\end{multline}
where $(*)$ follows from the fact that the number of cards  below $\pi(i)$ in   the set $D^{(n)}_i$ of cards remaining in the old deck before the $i$'th step is equal to  $(\pi(i)-1) -n \cdot F_\pi(i-1,\pi(i)-1)$, cf.\ \eqref{empirical_cdf}.
The proof of \eqref{distribution} follows from \eqref{condprob_pi} by the chain rule.
\end{proof}
\begin{proposition}[Symmetry]\label{inverse_cor}
If $\sigma_n \sim \mathrm{PERM}(\gt,n)$ then $\sigma^{-1}_n \sim \mathrm{PERM}(\gt,n)$.
\end{proposition}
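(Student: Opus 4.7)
The plan is to deduce $\sigma_n \stackrel{d}{=} \sigma_n^{-1}$ directly from the product-form formula \eqref{distribution} in Proposition \ref{distribution_prop}: it suffices to verify that $\mathbb{P}(\sigma_n=\pi)=\mathbb{P}(\sigma_n=\pi^{-1})$ for each fixed $\pi \in S_n$. The crucial preliminary observation is the transposition symmetry
\[
F_{\sigma^{-1}}\!\Big(\tfrac{i}{n},\tfrac{j}{n}\Big)=F_{\sigma}\!\Big(\tfrac{j}{n},\tfrac{i}{n}\Big),\qquad \sigma \in S_n,\; i,j \in \{0,1,\dots,n\},
\]
which is immediate from \eqref{empirical_cdf}, since both sides count $\{(s,t):s\leqslant i,\, t\leqslant j,\, \sigma(s)=t\}$. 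Combined with \eqref{def_eq_cup_cdf} this yields $V_{\sigma^{-1}}(i/n,j/n)=V_{\sigma}(j/n,i/n)$.

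With this identity in hand, I would write out the right-hand side of \eqref{distribution} for $\pi^{-1}$ and substitute the $V_{\pi^{-1}}$ symmetry, turning the $i$-th factor into an expression involving $V_\pi\big(\tfrac{\pi^{-1}(i)-1}{n},\tfrac{i-1}{n}\big)$. I would then reindex the product via the change of variable $j=\pi^{-1}(i)$ (equivalently $i=\pi(j)$); since $\pi$ is a bijection on $[n]$, the index $j$ also runs over $[n]$. Under this reindexing the numerator at position $j$ becomes exactly
\[
\gt\!\Big(V_{\pi}\big(\tfrac{j-1}{n},\tfrac{\pi(j)-1}{n}\big)\Big)-\gt\!\Big(V_{\pi}\big(\tfrac{j-1}{n},\tfrac{\pi(j)-1}{n}\big)+\tfrac{1}{n}\Big),
\]
matching the numerator of the corresponding factor in \eqref{distribution} applied to $\pi$.

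For the denominator, both $\mathbb{P}(\sigma_n=\pi)$ and $\mathbb{P}(\sigma_n=\pi^{-1})$ have the same product $\prod_{i=1}^{n}\gt\!\big(\tfrac{i-1}{n}\big)$, which is permutation-independent; equivalently, after the reindexing $i=\pi(j)$ the factor $\gt\!\big(\tfrac{i-1}{n}\big)$ becomes $\gt\!\big(\tfrac{\pi(j)-1}{n}\big)$, and the product over $j\in[n]$ is unchanged because $\{\pi(j)-1:j\in[n]\}=\{0,1,\dots,n-1\}$. Putting the numerator and denominator together gives $\mathbb{P}(\sigma_n=\pi^{-1})=\mathbb{P}(\sigma_n=\pi)$, as desired.

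The argument is essentially bookkeeping, so I do not anticipate a genuine obstacle: the only point requiring care is keeping the roles of position and value straight when verifying the $F_{\sigma^{-1}}\leftrightarrow F_\sigma$ identity, and making sure the reindexing $j=\pi^{-1}(i)$ is applied consistently in both the numerator and the denominator of \eqref{distribution}.
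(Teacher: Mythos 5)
Your argument is correct and follows essentially the same route as the paper: establish $V_{\pi^{-1}}(x,y)=V_\pi(y,x)$, substitute into the product formula \eqref{distribution}, and reindex by $j=\pi^{-1}(i)$ to match the factors of $\mathbb{P}(\sigma_n=\pi)$, noting the denominator $\prod_i \gt\big(\tfrac{i-1}{n}\big)$ is permutation-independent. One small slip in your justification of the $F$-symmetry: the common set counted by $F_{\sigma^{-1}}(i/n,j/n)$ and $F_\sigma(j/n,i/n)$ should be $\{(s,t): s\leqslant i,\ t\leqslant j,\ \sigma(t)=s\}$ (equivalently $\{(s,t): s\leqslant j,\ t\leqslant i,\ \sigma(s)=t\}$), not $\{(s,t): s\leqslant i,\ t\leqslant j,\ \sigma(s)=t\}$ as written — the latter counts $F_\sigma(i/n,j/n)$ itself; the identity you then use is nevertheless the correct one.
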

\begin{proof}
 We only need to check that for every $\pi \in S_n$ we have $
        \mathbb{P}(\sigma_n = \pi^{-1}) = \mathbb{P}(\sigma_n = \pi)$.
First observe that $V_{\pi^{-1}}\left( x, y  \right) \equiv V_{\pi}\left( y, x  \right)$ follows from
\eqref{cup_empirical_cdf} and the fact that the matrix of the permutation $\pi^{-1}$ is the transpose of the matrix of $\pi$. Using this and $\pi(\pi^{-1}((i))=i$ we obtain
$V_{\pi^{-1}}\left( \frac{i-1}{n}, \frac{\pi^{-1}(i)-1}{n}  \right)=V_{\pi}\left( \frac{\pi^{-1}(i)-1}{n}, \frac{\pi(\pi^{-1}((i))-1}{n}  \right)$, thus the numerator of the $i$th term in the product form of $\mathbb{P}(\sigma_n = \pi^{-1})$ is the same as
the numerator of the $\pi^{-1}(i)$th term in the product form of $\mathbb{P}(\sigma_n = \pi)$ (cf.\ \eqref{distribution}). The product of the numerators in the product forms of $
        \mathbb{P}(\sigma_n = \pi^{-1})$ and $ \mathbb{P}(\sigma_n = \pi)$  contain the same terms in different order, while the denominators are the same, thus
    we indeed have $
        \mathbb{P}(\sigma_n = \pi^{-1}) = \mathbb{P}(\sigma_n = \pi)$.
\end{proof}

\section*{Acknowledgements}
We thank P\'eter Csikv\'ari for his contributions to the proof of Proposition \ref{inverse_cor}   and Mikl\'os Ab\'ert for useful discussions. This work is partially supported by the ERC Synergy under Grant No. 810115 - DYNASNET and the grant NKFI-FK-142124 of NKFI
(National Research, Development and Innovation Office). 
\end{document}